\documentclass[a4paper,12pt,reqno]{amsart}
\usepackage{float}
\usepackage{graphicx}
\usepackage{tikz}
\parskip=1mm
\parskip=1mm
\usepackage{amsmath,amsthm,amssymb,amsfonts}
\usepackage{caption}
\usepackage{subcaption}
\usepackage{latexsym}
\usepackage{color}     
\usepackage{hyperref}
\usepackage{enumerate}
\usepackage{enumitem}

\usepackage{geometry}
\usepackage[normalem]{ulem}

\newcommand{\RRR}{\color{black}}
\newcommand{\EEE}{\color{black}}

\newcommand{\RR}{\mathbb R}
\def\Eins{{\mathchoice {\rm 1\mskip-4mu l} {\rm 1\mskip-4mu l}%
					{\rm 1\mskip-4.5mu l} {\rm 1\mskip-5mu l}}}
\newcommand{\cY}{\mathcal{Y}}
\newcommand{\cof}{\operatorname{cof}}
\newcommand{\abs}[1]{\left\vert#1\right\vert}
\newcommand{\norm}[1]{\left\|#1\right\|}
\newcommand{\mysetr}[2] {\left\{#1\,\left|\,#2\right.\right\}}
\newcommand{\idmatrix}{\Eins}
\newcommand{\bald}{\begin{aligned}}
\newcommand{\eald}{\end{aligned}}
\newcommand{\Om}{\Omega}

\newcommand{\vect}[1]{\left(\begin{array}{c} #1 \end{array}\right)}

\newtheorem{theorem}{Theorem}

\newtheorem{thm}[theorem]{Theorem}

\newtheorem{lem}[theorem]{Lemma}
\newtheorem{prop}[theorem]{Proposition}
\theoremstyle{definition}

\theoremstyle{remark}
\newtheorem{rem}[theorem]{Remark}


\allowdisplaybreaks

\numberwithin{equation}{section}

\hyphenation{pe-na-li-za-tion}

\begin{document}

\title[A new Lavrentiev example]{A new example for the Lavrentiev phenomenon\\ in Nonlinear Elasticity}
\author[S. Almi]{Stefano Almi}
\address[Stefano Almi]{University of Naples Federico II, Department of Mathematics and Applications R. Caccioppoli, via Cintia, Monte S. Angelo, 80126 Naples, Italy. }
\email{stefano.almi@unina.it}

\author[S. Kr\"omer]{Stefan Kr\"omer}
\address[Stefan Kr\"omer]{The Czech Academy of Sciences, Institute of Information Theory and Automation, Pod vod\'{a}renskou v\v{e}\v{z}\'{\i}~4, 182~08~Praha~8, Czech Republic. }
\email{skroemer@utia.cas.cz}

\author[A. Molchanova]{Anastasia Molchanova}
\address[Anastasia Molchanova]{University of Vienna, Faculty of Mathematics, Oskar-Morgenstern Platz 1, 1090 Vienna, Austria. }
\email{anastasia.molchanova@univie.ac.at}

\date{\today}

\begin{abstract}
	We present a new 
	example for the Lavrentiev phenomenon in context of nonlinear elasticity, caused by an interplay of the elastic energy's resistance to infinite compression
	and the Ciarlet--Ne\v{c}as condition, a constraint preventing global interpenetration of matter  
	on sets of full measure.
\end{abstract}

\subjclass{74B20,  
			  46E35.  
			  }
\keywords{Nonlinear elasticity, local injectivity, global injectivity, Ciarlet--Ne\v{c}as condition, Lavrentiev phenomenon, approximation}

\maketitle

\let\thefootnote\relax\footnotetext{Corresponging author: Stefano Almi, University of Naples Federico II, Department of Mathematics and Applications R. Caccioppoli, via Cintia, Monte S. Angelo, 80126 Naples, Italy.\\ Email: stefano.almi@unina.it}


\section{Introduction\label{sec:intro}}

Following the by-now classical theory of nonlinear elasticity \cite{Ant05B,Ba77a,Cia88B,Sil97B}, we consider an elastic body occupying in its reference configuration an open bounded set~$\Om \subseteq \RR^{d}$ 
with Lipschitz boundary~$\partial \Om$, subject to a prescribed boundary condition on a part $\Gamma\subset \partial\Om$
with positive surface measure, i.e., $\mathcal{H}^{d-1}(\Gamma)>0$. 
A possible deformation of the body is described by a mapping
$y\colon \Omega \to \RR^{d}$ 
such that $y = y_0$ on $\Gamma$, where $y_0$ is the imposed boundary data.
Its associated internally stored elastic energy is given by the functional 
\begin{equation}\label{def:energy}
	E(y):=\int_{\Omega} W(\nabla y(x)) \, dx,
\end{equation}
with a function $W$ representing material properties: the local energy density,  
which \RRR is here assumed to be only a function of the deformation gradient and not of the position~$x$. \EEE
A crucial aspect of this mathematical model~\cite{Ball2010} is to define a suitable class of admissible deformations that  capture relevant features, such as non-interpenetration of matter, which mathematically translates into injectivity of~$y$.
However, considering different admissible classes can lead to a Lavrentiev phenomenon, i.e., the functional infima differ when restricting the minimization of ~\eqref{def:energy} to more regular deformations, such as~$W^{1, \infty}$ in place of~$W^{1, p}$.
Functionals demonstrating this behavior were first discovered in the early 20th century \cite{La1927a,Man1934}. 
There the minimum value over $W^{1,1}$ is strictly less than the infimum over~$W^{1,\infty}$.  
For an extensive survey on the Lavrentiev phenomenon in a broader context, we refer the interested reader to \cite{ButBel1995}.

In the context of nonlinear elasticity, the Lavrentiev phenomenon 
was first observed with admissible deformations that allow cavitations, i.e., the formation of voids in the material \cite{Bal1982}.
For the study of cavitations, we refer to~\cite{BarDou2020,HeMo15a} and references therein.

A natural question raised in \cite{BalMiz1985} and \cite{Ba02a} is: \\
\textit{Can the Lavrentiev phenomenon occur for elastostatics under growth conditions on the stored-energy function, ensuring that all finite-energy deformations are continuous?}\\
  This is indeed the case, and the first example of this kind 
has been given in two dimensions \cite{Foss2003,FosHruMiz2003b,FoHruMi03a}.
It features an energy density with desirable properties: $W$ is smooth, polyconvex, frame-indifferent, isotropic, 
$W(F) \gtrsim |F|^{p}$ with $p>2$, and $W(F) \to \infty$ as $\det F \to 0+$. Moreover, admissible deformations are almost everywhere (a.e.) injective.
In these examples, the reference configuration is represented by a disk sector $\Om_{\alpha}: = \{ r (\cos \theta,\sin \theta): 0<r<1, 0<\theta < \alpha\}$. 
A crucial aspect for the emergence of the Lavrentiev phenomenon in that example is the local behavior of (almost) minimizers near the tip at $r=0$, interacting with a particular choice of boundary conditions.
The latter fix   
the origin $y(0,0) = (0,0)$, $y(1,\theta) = (1, \frac{\beta}{\alpha} \theta)$ and $y(\Om_\alpha) \subset \Om_\beta$, where $0 < \beta < \frac{3}{4} \alpha$. 

In the current paper, we provide examples of the Lavrentiev phenomenon in elasticity both in two and three dimensions. 
The elastic energy is of a simple neo-Hookean form with physically reasonable properties as described above, and admissible deformations are continuous and a.e.-injective. 
Differently from~\cite{Foss2003,FosHruMiz2003b,FoHruMi03a}, the Lavrentiev phenomenon in our example
is not related to the local behavior of almost minimizers near prescribed boundary data, but to a possible global self-intersection of the material that still maintains a.e.~injectivity by compressing two different material cross-sections to a single point (or line in 3D) of self-contact in deformed configuration. It turns out to be energetically favorable due to our particular choice of boundary conditions but is no longer possible if we restrict to a sufficiently smooth class of admissible deformations. This then leads to a higher energy infimum. 

Throughout the paper, we consider \emph{locally orientation preserving} deformations with $p$-Sobolev regularity
\begin{equation}\label{def:W1p+}
	W_+^{1,p} (\Om \RRR;\EEE \RR^{d}) := \{ y \in W^{1,p} (\Om; \RR^{d})\mid \det\nabla y>0~\text{a.e.~in $\Om$}\}\subset W^{1,p}(\Om; \RR^{d}).
\end{equation}
If $p>d$, the Sobolev embedding theorems ensure the continuity of $W^{1,p}$-mappings.
The question of injectivity of deformations, i.e., non-interpenetration of matter, is more delicate and it has been extensively studied. 
 Let us mention just a few references.
For local invertibility conditions, see~\cite{BarHenMor2017, FoGa95a, Hen-Str-2020}.   
As for global injectivity one may ask some 
coercivity with respect to specific ratios of powers of a matrix $F$, its cofactor matrix $\cof F$, and its determinant $\det F$
  combined with global topological information from boundary values \cite{Ba81a,HeMoOl21a,IwaOnn2009,Kroe20a,MolVod2020,Sve88a} or 
second gradient~\cite{HeaKroe09a},
as well as other regularity~\cite{CiaNe87a,Tan1988,Sve88a}
and topological restrictions such as (INV)-condition~\cite{BarHenMor2017,ConDeLel2003,HenMor2010,MulSpe1995,SwaZie2004}
and considering limits of homeomorphisms~\cite{BouHenMol2020,DolHenMol2022,IwaOnn2009,MolVod2020}.
In this paper, we adopt the approach from~\cite{CiaNe87a}, where the authors investigate a class of mappings $y\in W_+^{1,p} (\Om; \RR^{d})$ satisfying the \textit{Ciarlet--Ne\v{c}as condition}:
\begin{align}\label{CN}\tag{CN}
	\int_\Omega \det(\nabla y (x)) \,dx \leq \abs{y(\Omega)},
\end{align} 
and prove that the mappings of this class are a.e.-injective.

In the examples we consider $W(F) \gtrsim |F|^{p} + (\det F)^{-q}$,
the reference configuration $\Omega$ \RRR in dimension $d=2,3$. \EEE The boundary data $y_0$ are chosen in such a way that the energy $E$ favors deformations that have nonempty sets of non-injectivity. In particular, we construct in Section~\ref{sec:Lav2d} 
(resp.~Section~\ref{sec:Lav3d}) a competitor $y \in W^{1, p}_{+} (\Om; \RR^{2})$ (resp.~$y \in W^{1, p}_{+}(\Om; \RR^{3})$) satisfying the Ciarlet--Ne\v{c}as condition~\eqref{CN} and having a line (resp.~a plane) of non-injectivity. 
The energy of such deformation is shown to be strictly less than that of
Lipschitz deformations, for which injectivity is ensured everywhere.   
The global injectivity in this case follows from the Reshetnyak theorem for mappings of finite distortion \cite{ManVill1998}.
Specifically, a mapping $y\in W^{1,d}_{loc}( \Om; \RR^{d})$ with $\det \nabla y \geq 0$ a.e.\ has finite distortion if $|\nabla y (x)| = 0$ whenever $\det \nabla y (x) = 0$. 
If, in addition, the distortion $K_{y}:=\frac{|\nabla y |^d}{\det \nabla y} \in L^{\varkappa}$ with $\varkappa > d-1$, then 
$y$ is either constant or open and discrete.
Furthermore, it is not difficult to see that an a.e.-injective and open mapping $y\in W^{1,d}_{loc} (\Om; \RR^{d})$ is necessarily injective everywhere, as pointed out in
\cite[Lemma 3.3]{GraKruMaiSte19a}. 
For a general theory of mappings of finite distortion the reader is referred to 
\cite{HenKos2014}.

Our example also shows that, depending on the precise properties of the energy density~$W$, 
there can be an energy gap between the class of orientation preserving a.e.~injective deformations (i.e., satisfying the Ciarlet--Ne\v{c}as condition) on the one hand and the strong (or weak) closure of Sobolev homeomorphisms in the ambient Sobolev space on the other hand.
If these classes do not coincide (which can certainly happen if there is not enough control of the distortion via the energy to apply the Reshetnyak theorem \cite{ManVill1998} as above, \RRR see \cite[Fig.~4]{Pa08a}, e.g.), \EEE one has to carefully choose which constraint to use to enforce non-interpenetration of matter, even if $p>d$. In our example,
the Ciarlet--Ne\v{c}as condition does allow a ``deep'' self-interpenetration in such a scenario. As a matter of fact, this self-interpenetration is also topologically stable in the sense that all $C^0$-close deformations still self-intersect (see Figures~\ref{f:reference_configuration} and 
\ref{f:cross} for reference and deformed configurations in the 2D case). To us, it seems doubtful that such a deformation corresponds to a physically meaningful state. This strongly speaks for preferring a closure of homeomorphisms as the admissible class in such cases. 
An open problem in this context is to find sharp conditions for the energy density 
so that all a.e.-injective orientation preserving Sobolev maps can be found as strong (or weak) limits of Sobolev homeomorphisms in $W^{1,p}$.
In case $p\geq d$, having $K_{y}\in L^{\varkappa}$ with $\varkappa > d-1$ as above is clearly sufficient, but probably not necessary, at least not in dimension $d\geq 3$.

The plan of the paper is the following.
Section \ref{sec:ex} is dedicated to the general setting of the problem and a few basic auxiliary results.   
In Sections \ref{sec:Lav2d} and \ref{sec:Lav3d}, we discuss the Lavrentiev phenomenon in dimensions two and three for the energy $E$ in the class of deformations
$y\in W_+^{1,p} (\Om, \RR^{d})$ satisfying the Ciarlet--Ne\v{c}as condition \eqref{CN} as well as suitable Dirichlet boundary conditions on selected parts of $\partial \Omega$.

\section{General setting\label{sec:ex}}

In dimension $d \geq 2$ we consider a Neo-Hookean nonlinear elastic material with energy density:
\begin{align}\label{W0}
  W(F):=\begin{cases}
		\abs{F}^p+\gamma \dfrac{1}{(\det F)^q} & \text{if $\det F>0$},\\
		+\infty & \text{else,}
		\end{cases}
		\qquad \text{for $F\in \RR^{d\times d}$.}
\end{align}
In \eqref{W0}, $\abs{F}:=\big(\sum_{ij} F^2_{ij}\big)^{\frac{1}{2}}$ denotes the standard Euclidean matrix norm,
\RRR $p>d$ and $q>0$ are constants, \EEE and $\gamma>0$ is chosen in such a way that~$W$ is minimized at the identity matrix~$\idmatrix$,
i.e.,
\begin{equation}\label{def:gamma}
	\gamma := \frac{p d^{\frac{p}{2}-1}}{q}.
\end{equation}
Indeed, let $\det F >0$ and $\lambda_1, \ldots, \lambda_d >0$ be singular values of $F$, then 
\begin{displaymath}
	W(F)=\mathcal{W}(\lambda_1,\dots,\lambda_d) = \Big(\sum_{k=1}^{d} \lambda_k^2\Big)^{\frac{p}{2}} + \gamma \Big(\prod_{k=1}^{d}\lambda_k\Big)^{-q} 
\end{displaymath}
and equalities 
$\frac{\partial}{\partial \lambda_i}\mathcal{W}\mid_{\lambda_j=1}=0$ 
give us \eqref{def:gamma}.
Moreover, 
$\lambda_i =1$, $i=1,\ldots d$, 
is the global minimizer of $\mathcal{W}$.
Indeed, if 
$(\lambda_1, \ldots \lambda_d)$ 
is a local minimum, then for any 
$i = 1,\ldots d$,
\begin{displaymath}
	\frac{\partial}{\partial \lambda_i}\mathcal{W}
	=p\lambda_i \Big(\sum_{k=1}^{d} \lambda_k^2\Big)^{\frac{p}{2}-1}  - \gamma q \frac{1}{\lambda_i} \Big(\prod_{k=1}^{d}\lambda_k\Big)^{-q}
	=0.
\end{displaymath}
Therefore, \RRR $\lambda_i=1$ for all $i= 1,\ldots d$. \EEE
In other words only rotation matrices $F \in SO(d)$ are minimizers of $W$.

Below we summarize some ``good'' \cite{Ball2010} properties of the energy density $W$.
\begin{prop}
\label{p:properties-W}
	For $W$ given by \eqref{W0} and \eqref{def:gamma},
	we have that 
	\begin{enumerate}
		\item $W \in C^{\infty}(\RR^{d\times d}_{+}\RRR ;\EEE \RR)$,
		\item $W(F) \to +\infty$ as $\det F \to 0{+}$,
		\item $W$ is frame-indifferent and isotropic, i.e.\ $W(RF) = W(FR) = W(F)$ for all $R\in SO(d)$ and $F\in \RR^{d\times d}_{+}$,
		\item $W$ is polyconvex,
		\item $W(F) - W(\idmatrix) \geq 0$ for all $F\in \RR^{d\times d}_{+}$,
		\item there exist constants $c=c(d,p,q)>0$ and $b=b(d,p,q)\in \mathbb{R}$ such that 
		\begin{align}\label{Wcoercivity}
			W(F) \geq c \left(\abs{F}^p + \abs{\cof F}^{\frac{p}{d-1}} + (\det F)^{\frac{p}{d}}\right) +b.
		\end{align}
	\end{enumerate}
\end{prop}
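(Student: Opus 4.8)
The plan is to verify the six items essentially independently, reducing (4)--(6) to statements about the singular values $\lambda_1,\dots,\lambda_d>0$ of $F$ whenever $\det F>0$. Items (1)--(3) I would dispatch first and directly. For (1), on the open cone $\RR^{d\times d}_+=\{\det F>0\}$ the map $F\mapsto \det F$ is polynomial and strictly positive, so $F\mapsto(\det F)^{-q}$ is smooth there; moreover $\det F>0$ forces $F\neq 0$, so $F\mapsto\abs{F}^p=(\sum_{ij}F_{ij}^2)^{p/2}$ is a composition of smooth maps, and $W$ is a finite sum of such. Item (2) is immediate: as $\det F\to 0{+}$ the penalty $\gamma(\det F)^{-q}\to+\infty$ while $\abs{F}^p\geq 0$. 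For (3), since $\abs{RF}^2=\operatorname{tr}(F^\top R^\top RF)=\abs{F}^2$ and $\det(RF)=\det R\,\det F=\det F$ for $R\in SO(d)$ (and likewise for $FR$), both ingredients of $W$ are $SO(d)$-invariant from the left and right.

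For polyconvexity (4) I would exhibit a convex $g$ with $W(F)=g(F,\det F)$, taking $g(A,\delta):=\abs{A}^p+\gamma\,\psi(\delta)$, where $\psi(\delta):=\delta^{-q}$ for $\delta>0$ and $\psi(\delta):=+\infty$ otherwise. The map $A\mapsto\abs{A}^p$ is convex because $t\mapsto t^{p/2}$ is convex and nondecreasing on $[0,\infty)$ (here $p>d\geq 2$, so $p\geq 2$) and $A\mapsto\abs{A}^2$ is convex; and $\psi$ is convex on $\RR$ since $\delta\mapsto\delta^{-q}$ has second derivative $q(q+1)\delta^{-q-2}>0$ on $(0,\infty)$ and the $+\infty$ extension preserves convexity and lower semicontinuity. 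A sum of convex functions of the arguments $(A,\delta)$ is convex, so $W$ is polyconvex.

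Item (5) I would obtain from the singular-value form $\mathcal W(\lambda)=(\sum_k\lambda_k^2)^{p/2}+\gamma(\prod_k\lambda_k)^{-q}$ already recorded above. The first-order analysis in the preamble shows every critical point satisfies $\lambda_i\equiv 1$; to promote this to a global minimum I would add the observation that $\mathcal W$ is coercive on $(0,\infty)^d$ (it blows up as any $\lambda_i\to 0{+}$ through the product term and as any $\lambda_i\to\infty$ through the sum term), so a minimizer exists in the interior and must be the unique critical point $\lambda=(1,\dots,1)$. Hence $W(F)\geq W(\idmatrix)$ for all $F\in\RR^{d\times d}_+$, which is (5).

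Finally, for the coercivity estimate (6) I would again work with singular values. The singular values of $\cof F$ are the products $\prod_{j\neq i}\lambda_j$, so by the power-mean inequality each is at most $((d-1)^{-1}\abs{F}^2)^{(d-1)/2}$, giving $\abs{\cof F}\leq C_d\abs{F}^{d-1}$; and by AM--GM $\det F=\prod_i\lambda_i\leq(d^{-1}\abs{F}^2)^{d/2}=d^{-d/2}\abs{F}^d$. Raising these to the powers $p/(d-1)$ and $p/d$ respectively bounds both extra terms by constant multiples of $\abs{F}^p$, so $\abs{F}^p+\abs{\cof F}^{p/(d-1)}+(\det F)^{p/d}\leq(1+A+B)\abs{F}^p$ for explicit $A,B>0$; since $W(F)\geq\abs{F}^p$ (the penalty being nonnegative), \eqref{Wcoercivity} follows with $c=(1+A+B)^{-1}$ and $b=0$. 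The only genuinely structural points are (4) and (5); everything else is bookkeeping, and the subtlety I would be most careful about is the passage from \emph{critical point} to \emph{global minimizer} in (5), which is why I insist on the coercivity of $\mathcal W$ there.
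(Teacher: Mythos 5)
Your proof is correct and follows what is essentially the paper's (implicit) approach: the paper states this proposition without a dedicated proof, justifying only item (5) in the preceding text via the same singular-value critical-point computation, while items (1)--(4) and (6) are left as standard facts, for which your arguments (smoothness on the open cone, joint convexity of $g(A,\delta)=|A|^p+\gamma\psi(\delta)$ for polyconvexity, and the AM--GM bounds $|\cof F|\lesssim |F|^{d-1}$, $\det F\lesssim |F|^{d}$ for coercivity) are exactly the expected ones. Your insistence on the coercivity of $\mathcal{W}$ in item (5) is a worthwhile refinement rather than a redundancy: the paper's preamble only shows that any local minimum of $\mathcal{W}$ must lie at $(1,\dots,1)$, and the existence of a global minimizer on $(0,\infty)^d$ --- which you supply via compactness of sublevel sets (the sum term bounds the $\lambda_i$ above, the product term bounds them away from $0$) --- is precisely what is needed to upgrade that computation to the global statement $W(F)\geq W(\idmatrix)$.
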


For later use, we point out the following proposition, which express the minimality of the identity map in a quantitative form. \RRR From now on, \EEE $\| F\|_{2}$ \RRR denotes \EEE the operator norm of $F \in \mathbb{R}^{d \times d}$, i.e., $\norm{F}_2:=\sup\{ \abs{Fe}:\abs{e}=1\}$.

\begin{prop}
\label{p:lower-bound}
	For $W$ given by \eqref{W0} and \eqref{def:gamma},
	we have the following lower bound
	\begin{align}\label{Wlowerbound}
		W(F)-W(\idmatrix) \geq c \abs{\norm{F}_2-1}^p+c(\norm{F}_2-1)^2
	\end{align}
	\RRR for \EEE some constant $c=c(d,p,q)>0$.
\end{prop}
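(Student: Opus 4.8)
The plan is to pass to singular values and reduce everything to a scalar inequality in $t:=\norm{F}_2$. Since $W$ is frame-indifferent and isotropic (Proposition~\ref{p:properties-W}(3)), for $F\in\RR^{d\times d}_{+}$ with singular values $\lambda_1,\dots,\lambda_d>0$ we have $W(F)=\mathcal{W}(\lambda_1,\dots,\lambda_d)$ and $\norm{F}_2=\max_k\lambda_k=:t$, while $W(\idmatrix)=\mathcal{W}(\mathbf 1)$ with $\mathbf 1:=(1,\dots,1)$; for $\det F\le 0$ the left-hand side is $+\infty$ and there is nothing to prove. Writing $f(\lambda):=\mathcal{W}(\lambda)-\mathcal{W}(\mathbf 1)$, which is nonnegative with its only zero at $\mathbf 1$ (Proposition~\ref{p:properties-W}(5) and the uniqueness of the minimizer established above), it suffices to find $c>0$ with
\begin{equation*}
	f(\lambda)\ge c\,\abs{t-1}^p+c\,(t-1)^2\qquad\text{for all }\lambda\in(0,\infty)^d,\ t=\max_k\lambda_k.
\end{equation*}
I would prove this on three regions covering $(0,\infty)^d$ and then take $c$ to be the smallest of the constants produced.

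\emph{Near the identity.} Here I would exploit that $\mathbf 1$ is a nondegenerate minimum. A direct computation of the Hessian of $\mathcal{W}$ at $\mathbf 1$ yields $H=aI+bJ$, where $I$ is the identity and $J$ the all-ones matrix, with $a=2p\,d^{\frac p2-1}>0$ and $b=p\,d^{\frac p2-2}(p-2+dq)>0$; here the calibration $\gamma q=p\,d^{\frac p2-1}$ from \eqref{def:gamma} and the hypothesis $p>d\ge 2$ (so $p-2>0$) are used. The eigenvalues of $H$ are $a$ and $a+bd$, both positive, so $H$ is positive definite. By Taylor's theorem there are $\delta\in(0,1]$ and $c_1>0$ such that $f(\lambda)\ge c_1\sum_k(\lambda_k-1)^2$ whenever $\abs{\lambda-\mathbf 1}<\delta$. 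Since the sum contains the term for the index attaining the maximum, $\sum_k(\lambda_k-1)^2\ge(t-1)^2$; and because $\abs{t-1}<\delta\le 1$ forces $\abs{t-1}^p\le(t-1)^2$, the target bound holds on $N_\delta:=\{\abs{\lambda-\mathbf 1}<\delta\}$ for any $c\le c_1/2$.

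\emph{Large $t$.} Here I would simply discard the positive determinant term, $\mathcal{W}(\lambda)\ge\big(\sum_k\lambda_k^2\big)^{p/2}\ge t^p$. Setting $T:=(2\mathcal{W}(\mathbf 1))^{1/p}$, on $\{t\ge T\}$ we get $f(\lambda)\ge t^p-\mathcal{W}(\mathbf 1)\ge\tfrac12 t^p$, while $\abs{t-1}^p+(t-1)^2\le 2t^p$ for $t\ge 1$; hence the bound holds there for any $c\le 1/4$.

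\emph{Intermediate region.} The remaining set $C:=\{\lambda\notin N_\delta,\ t<T\}$ is where the main work lies, since it is not compact: some $\lambda_k$ may tend to $0$. On $C$ the right-hand side is bounded, $\abs{t-1}^p+(t-1)^2\le M:=T^p+T^2$, so it suffices to bound $f$ below by a positive constant. If $\lambda_{\min}<a$ (with $a>0$ to be fixed) then $\prod_k\lambda_k<a\,T^{d-1}$, so $\mathcal{W}(\lambda)>\gamma(a\,T^{d-1})^{-q}$; choosing $a$ small makes this exceed $\mathcal{W}(\mathbf 1)+1$, giving $f>1$ there. On the complementary set $\{a\le\lambda_k\le T\ \forall k\}\setminus N_\delta$, which is compact, $f$ is continuous and strictly positive (its only zero $\mathbf 1$ lies in $N_\delta$), hence bounded below by some $m>0$. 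Thus $f\ge m':=\min\{1,m\}>0$ on $C$, and the bound holds for any $c\le m'/M$. Taking $c:=\min\{c_1/2,\ 1/4,\ m'/M\}$ finishes the proof. I expect the Hessian positivity at the identity to be the technical heart — it is exactly where \eqref{def:gamma} and the condition $p>d$ enter — while the noncompactness of $C$, tamed by the blow-up $W(F)\to+\infty$ as $\det F\to 0{+}$ (Proposition~\ref{p:properties-W}(2)), is the main structural subtlety.
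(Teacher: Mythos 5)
Your proof is correct, but it takes a genuinely different route from the paper's. The paper establishes a \emph{global} lower bound on the Hessian of $\mathcal{W}$, valid at every point of $(0,\infty)^d$, namely $\xi\cdot D^2\mathcal{W}\,\xi\ge \hat c\,\big(p(p-1)S^{p-2}+2\big)\abs{\xi}^2$, by splitting $D^2\mathcal{W}$ into rank-one positive semidefinite pieces plus a diagonal part estimated from below via the quantity $\mu$; it then integrates this bound twice along the straight segment joining $(1,\dots,1)$ to the vector of singular values, which produces the function $\lambda_1^p-1-p(\lambda_1-1)+(\lambda_1-1)^2$ and hence exactly the claimed combination $\abs{t-1}^p+(t-1)^2$, with constants that are traceable in principle. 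You instead use the Hessian only \emph{at} the identity (where it has the simple form $aI+bJ$ — your values $a=2pd^{p/2-1}$ and $b=pd^{p/2-2}(p-2+dq)$ are correct), and you cover the rest of $(0,\infty)^d$ by a growth argument for large $\norm{F}_2$ and a compactness-plus-blow-up argument in between, where the key observation is that the intermediate region fails to be compact only because some $\lambda_k$ may approach $0$, which is exactly where $\gamma(\det F)^{-q}$ diverges. All three of your regions check out, the regions do cover $(0,\infty)^d$, and your appeal to the uniqueness of the minimizer $\mathbf{1}$ of $\mathcal{W}$ (needed for strict positivity of $f$ on your compact set) is legitimate, since the paper establishes this before the proposition. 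As for what each approach buys: the paper's route yields a stronger, reusable intermediate fact — uniform convexity of $\mathcal{W}$ in the singular values with an explicit modulus — and constants with explicit dependence on $(d,p,q)$; your route gives non-effective constants (via compactness) but is more modular and more general, since it uses only qualitative features of $W$ (smoothness, nondegenerate unique minimizer at the identity, $p$-growth, blow-up as $\det F\to 0^{+}$) and would apply verbatim to any isotropic energy with these properties.
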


\begin{proof}
As before, we express $W(F)=\mathcal{W}(\lambda_1,\ldots,\lambda_d)$ and
$\norm{F}_2=\max\{\lambda_i:i=1,\ldots,d\}$ in terms of the singular values of $F$.
Abbreviating
\begin{displaymath}
	\textstyle S:=\Big(\sum_{k=1}^{d} \lambda_k^2\Big)^{\frac{1}{2}},\quad P:=\prod_{k=1}^{d}\lambda_k,
\end{displaymath}
we have that $\mathcal{W}(\lambda_1,\ldots,\lambda_d) = S^p+\gamma P^{-q}$. Thus, it holds
\begin{displaymath}
	\begin{aligned}
		&\frac{\partial^2}{\partial \lambda_i \partial \lambda_j} \mathcal{W}(\lambda_1,\dots,\lambda_d) 
		= \left(p(p-2)\lambda_i\lambda_j+p\delta_{ij} S^{2}\right)
		S^{p-4}
		+\gamma q(q+\delta_{ij}) \frac{1}{\lambda_i\lambda_j} P^{-q}.
	\end{aligned}
\end{displaymath}
Notice that
$\big(p(p-2)S^{p-4}\lambda_i\lambda_j\big)_{ij}$ 
and 
$\big(\gamma q^2 P^{-q} \lambda_i^{-1}\lambda_j^{-1}\big)_{ij}$ 
are positive semidefinite matrices of rank $1$, 
while the other contributions involving Kronecker's 
$\delta_{ij}$ 
give a diagonal matrix with positive coefficients that can be estimated for all 
$i=j$. 
Indeed, defining
\begin{displaymath}
	\mu=\mu(d,q):=\min \left\{
	\lambda_j^{-2} P^{-q} \mid 1 \geq \lambda_1,\ldots,\lambda_d>0,~S=1
	\right\}\geq d^{\frac{qd}{2}}>0
\end{displaymath}
(due to symmetry, $\mu$ does not depend on $j$), we obtain that
\begin{align}
\label{e:SP}
	& p S^{p-2}+\gamma q \lambda_j^{-2}P^{-q}\geq p S^{p-2}+\gamma q \mu S^{-qd-2}.
\end{align}
Choosing 
$\alpha \in (0, 1)$ 
such that 
$p \leq \frac{2- \alpha}{1 - \alpha}$ 
we may continue in~\eqref{e:SP} with
\begin{align*}
	& p S^{p-2}+\gamma q \lambda_j^{-2}P^{-q}\geq (1 - \alpha) p(p-1) S^{p-2}+ \big( (\alpha - 1) p^{2} + (2 - \alpha) p\big) S^{p-2} + \gamma q \mu S^{-qd-2},
\end{align*}
from which we infer the existence of a constant $\hat{c}=\hat{c}(\gamma,d,p,q,\mu)>0$ such that
\begin{displaymath}
	p S^{p-2}+\gamma q \lambda_j^{-2}P^{-q}\geq \hat{c} (p(p-1)S^{p-2}+2).
\end{displaymath}
Altogether, we get 
\begin{align}\label{D2Wc-posdef}
	\xi\cdot D^2\mathcal{W}(\lambda_1,\ldots,\lambda_d)\xi\geq \hat{c} (p(p-1)S^{p-2}+2)\abs{\xi}^2\quad\text{for $\xi\in \RR^d$}.
\end{align}

We now conclude for~\eqref{Wlowerbound}. Without loss of generality, we may assume that $\norm{F}_2=\lambda_1$. Let us define the curve
$\lambda\colon[0,1]\to (0, +\infty)^d$ connecting~$(\lambda_{1}, \ldots, \lambda_{d})$ to $(1, \ldots, 1)$
\begin{displaymath}
	t\mapsto \lambda(t)=(\lambda_1(t),\ldots,\lambda_d(t)):=(t\lambda_1+1-t,\ldots,t\lambda_d+1-t).
\end{displaymath}
Since $\frac{\partial}{\partial \lambda_j}\mathcal{W}(1,\ldots,1)=0$, integrating  twice along~$\lambda$ and using~\eqref{D2Wc-posdef} we obtain
\begin{displaymath}
	\begin{aligned}
		W(F)-W(\idmatrix)&=\int_0^1 \int_0^\tau \frac{d^2}{dt^2}\mathcal{W}\big(\lambda_1(t),\ldots,\lambda_d(t)\big)\,dt\,d\tau \\
		&\geq \hat{c} \int_0^1 \int_0^\tau (p(p-1)\lambda_1(t)^{p-2}+2)\dot\lambda_1(t)^2\,dt\,d\tau \\
		&= \hat{c} \int_0^1 \int_0^\tau  \frac{d}{dt} \Big[\big(p\lambda_1(t)^{p-1}+2\lambda_1(t)\Big] \dot\lambda_1(t)\,dt\,d\tau \\
		& = \hat{c} \left( \lambda_1^{p}-1-p(\lambda_1-1)+(\lambda_1-1)^2 \right) \geq c \left(|\lambda_1-1|^{p}+(\lambda_1-1)^2\right),
	\end{aligned}
\end{displaymath}
with $c:=\min\big\{\hat{c},\frac{1}{2}\big\}$ (exploiting that $p\geq 2$).
\end{proof}

In both the examples we present in this paper, we fix as reference configuration an open bounded set~$\Om_{s} \subseteq \RR^{d}$ with Lipschitz boundary~$\partial \Om_{s}$. Our set will always have two connected components whose precise shape will be chosen depending on the dimension $d$ and will further depend on a parameter $s>0$. 
For every $y \in W_+^{1,p} (\Om_{s}, \RR^{d})$ we define the energy functional
\begin{align*}\label{Eelastic}
	E_s(y):=\int_{\Omega_s} \left(W(\nabla y)-W(\idmatrix)\right) \,dx.
\end{align*}
In particular, notice that the energy $E_{s}$ is normalized to $0$ at $y = id$, since $W$ attains minimum value on $SO(d)$. 
Let~$\Gamma_{s}$ be a subset of $\partial\Om_{s}$, $\mathcal{H}^{n-1}(\Gamma_s)>0$, with imposed Dirichlet boundary data $y_{0}\in W^{1, p}_{+} (\Om_{s}, \RR^{d})$.
The set of admissible deformations~$\cY_s \subseteq W^{1, p}_{+} (\Om_{s}, \RR^{d})$ reads as
\begin{equation}\label{def:Ys}
	\cY_s:=\mysetr{y\in W_+^{1,p}(\Omega_s;\RR^d)}{\text{\eqref{CN} holds and $y=y_0$ on $\Gamma_s$}}.
\end{equation}

The existence of minimizers is nowadays classic and follows,  e.g., from~\cite[Theorem 5]{CiaNe87a} due to Proposition~\ref{p:properties-W} since $p>d$.

\begin{thm}
	If $\mathcal{Y}_{s}\neq \varnothing$ and 
	$\inf\limits_{{y} \in \mathcal{Y}_s} E_s(y)<\infty$,
	then there exists $\hat{y}_s \in \mathcal{Y}_{s}$ such that 
	$\inf\limits_{{y} \in \mathcal{Y}_s} E_s(y) = E_s(\hat{y}_s)$.
\end{thm}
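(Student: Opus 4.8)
The plan is to apply the direct method of the calculus of variations, using the three structural properties from Proposition~\ref{p:properties-W} --- coercivity~\eqref{Wcoercivity}, polyconvexity, and the blow-up $W(F)\to+\infty$ as $\det F\to 0+$ --- to supply, respectively, compactness, weak lower semicontinuity, and closedness of the constraint set.

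First I would fix a minimizing sequence $(y_n)\subset\mathcal{Y}_s$ with $E_s(y_n)\to m:=\inf_{\mathcal{Y}_s}E_s<\infty$. The coercivity estimate~\eqref{Wcoercivity} bounds $\norm{\nabla y_n}_{L^p}$, $\norm{\cof\nabla y_n}_{L^{p/(d-1)}}$ and $\norm{\det\nabla y_n}_{L^{p/d}}$ uniformly; together with the Dirichlet datum on $\Gamma_s$ (which has positive surface measure) and the Poincar\'e inequality this bounds $(y_n)$ in $W^{1,p}(\Omega_s;\RR^d)$. Passing to a subsequence, $y_n\rightharpoonup\hat y_s$ weakly in $W^{1,p}$, and since $p>d$ the compact Sobolev embedding gives $y_n\to\hat y_s$ uniformly on $\overline{\Omega_s}$. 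By the weak continuity of minors, $\cof\nabla y_n\rightharpoonup\cof\nabla\hat y_s$ and $\det\nabla y_n\rightharpoonup\det\nabla\hat y_s$ in the respective Lebesgue spaces.

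Next I would use polyconvexity to obtain weak lower semicontinuity of $E_s$: writing $W(F)=g(F,\cof F,\det F)$ with $g$ convex and lower semicontinuous and extended by $+\infty$ where $\det\le 0$, the weak convergence of the minors yields $E_s(\hat y_s)\le\liminf_n E_s(y_n)=m<\infty$. Finiteness of $E_s(\hat y_s)$ already forces $\det\nabla\hat y_s>0$ a.e., since otherwise $W(\nabla\hat y_s)=+\infty$ on a set of positive measure. It then remains to check that $\hat y_s$ lies in $\mathcal{Y}_s$. The boundary condition $\hat y_s=y_0$ on $\Gamma_s$ persists under weak $W^{1,p}$-convergence by continuity of the trace. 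For the Ciarlet--Ne\v{c}as condition~\eqref{CN}, the left-hand side passes to the limit by testing the weak convergence $\det\nabla y_n\rightharpoonup\det\nabla\hat y_s$ against the constant $1$, while for the right-hand side the uniform convergence $y_n\to\hat y_s$, combined with the area formula and degree theory (available since $p>d$), lets one pass to the limit exactly as in~\cite[Theorem 5]{CiaNe87a}, yielding $\int_{\Omega_s}\det\nabla\hat y_s\,dx\le\abs{\hat y_s(\Omega_s)}$.

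Having shown $\hat y_s\in\mathcal{Y}_s$ and $E_s(\hat y_s)\le m$, and since membership in $\mathcal{Y}_s$ gives $E_s(\hat y_s)\ge m$, I conclude $E_s(\hat y_s)=m$, so $\hat y_s$ is the sought minimizer. The main obstacle is the closedness of the constraint set $\mathcal{Y}_s$, and specifically the passage to the limit in the nonlocal Ciarlet--Ne\v{c}as inequality, whose right-hand side $\abs{\hat y_s(\Omega_s)}$ is not obviously semicontinuous under the available convergences; this delicate point is exactly what~\cite{CiaNe87a} resolves via the area formula for Sobolev maps satisfying Lusin's condition (N).
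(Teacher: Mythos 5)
Your proposal is correct and matches the paper's approach: the paper proves this theorem simply by invoking \cite[Theorem 5]{CiaNe87a}, whose applicability is guaranteed by the properties in Proposition~\ref{p:properties-W} (coercivity, polyconvexity, blow-up as $\det F\to 0+$) together with $p>d$, and your argument is precisely the direct-method proof underlying that citation. You also correctly identify the one genuinely delicate step---weak closedness of the Ciarlet--Ne\v{c}as constraint---and defer it to the same reference, so there is no substantive difference between your route and the paper's.
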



\section{The Lavrentiev phenomenon in dimension two\label{sec:Lav2d}}

In dimension $d=2$ we consider a reference configuration~$\Om_{s}$ consisting of two stripes of width $0<s<1$,
given by (see also Fig.~\ref{f:reference_configuration})
\begin{equation}\label{def:omega_s}
	\bald
		&\Omega_s:=S_1 \cup S_2,\quad\text{where}~~S_1:=(-1,1)\times (-s,s),~~S_2:=\xi+QS_1, \\
		&~~Q:=\left(\begin{array}{rr}0 & -1\\1 & 0\end{array}\right)~~\text{and}~~\xi:=\vect{4\\0}.
	\eald
\end{equation}

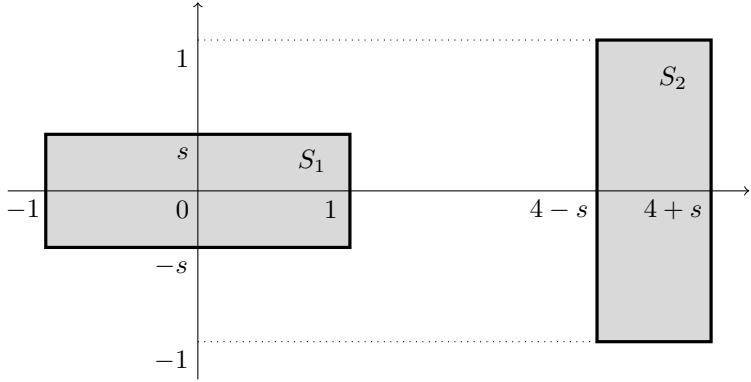
\begin{figure}[h!]
	\begin{tikzpicture}[every node/.style={font=\footnotesize}]
		\filldraw[fill= gray, very  thick, fill opacity=0.3] (-2,-0.75) rectangle (2,0.75);
		\filldraw[fill= gray, very  thick, fill opacity=0.3] (5.25,-2) rectangle (6.75,2);
		\node at (-0.2, -0.25) {$0$};
		\node at (1.5, 0.4) {$S_{1}$};
		\node at (6.25, 1.5) {$S_{2}$};
		\node at (-0.35, -1) {$-s$};
		\node at (-0.2, 0.5) {$s$};
		\node at (-2.3, -0.25) {$-1$};
		\node at (1.75, -0.25) {$1$};
		\node at (-0.35, -2.25) {$-1$};
		\node at (-0.2, 1.75) {$1$};
		\node at (4.75, -0.25) {$4-s$};
		\node at (6.25, -0.25) {$4+s$};
		\draw[black, ->] (-2.5,0) -- (7.25, 0);
		\draw[black, ->] (0,-2.5) -- (0, 2.5);
		\draw[black, dotted] (0,2) -- (5.25, 2);
		\draw[black, dotted] (0,-2) -- (5.25, -2);
	\end{tikzpicture}
	\caption{Visualization of the reference configuration~$\Om_{s} = S_{1} \cup S_{2}$.} 
	\label{f:reference_configuration}
\end{figure}

We denote by~$\Gamma_{s}$ the subset of~$\partial\Om_{s}$ given by 
\begin{displaymath}
	\Gamma_s:= \big[ \{-1,1\}\times (-s,s) \big] \cup \big[ (4-s,4+s)\times \{-1,1\} \big] \subset \partial\Omega_s.
\end{displaymath}
On $\Gamma_{s}$ we impose the following Dirichlet boundary condition
\begin{displaymath}
	y_0(x):=
	\begin{cases}
		x & \text{for $x\in \overline{S_1}$,}\\
		x-\xi &\text{for $x\in \overline{S_2}$}
	\end{cases}
\end{displaymath}
Notice that on both pieces of $\Omega_s$ the function $y_{0}$ is such that~$\nabla y_0=\idmatrix$, which minimizes $W$ pointwise. However,~$y_0(\Omega_s)$ is cross-shaped with $y_0$ doubly-covering the center. Hence,~$y_0$ is not globally injective and does not satisfy \eqref{CN}. It is not hard to see that $\cY_s$, defined by \eqref{def:Ys}, still contains many admissible functions as long as $s<1$.

\RRR
\begin{rem}
Our reference configuration $\Omega_s$ is not connected, but this is not essential for our examples, just convenient. In fact, we could 
add a connecting piece $S_3$ to $\Omega_s$, say from  $\{-1\}\times (-s,s)$ (the left edge of $S_1$) to $(4-s,4+s)\times \{1\}$ (the upper edge of $S_2$), while still imposing the Dirichlet condition on $\Gamma_s$ as before. This would not affect our analysis near the possible self-intersection which only involves $S_1\cup S_2$ (cf.~Fig.~\ref{f:cross} and Fig.~\ref{f:around} below), but it would create extra technical hassle, as we would then have to control behavior of $y$ and the minimal energy contribution on $S_3$ as well. A more refined example for the extended reference configuration $\tilde\Omega_s:=S_1\cup S_2\cup S_3$ could even try to replace the ``inner'' part of the Dirichlet condition on $(\{-1\}\times (-s,s))\cup ((4-s,4+s)\times \{1\})\subset \Gamma\cap \tilde\Omega_s$ by an obstacle that the deformations are forced to wrap around. The precise formulation and analysis for this would be much more challenging and technical, though.
\end{rem}
\EEE

\begin{thm}[The Lavrentiev phenomenon occurs]\label{thm:lav2d}
	Let $p \in (2, +\infty)$ and $q \in (1, \frac{p}{p-2}]$. Then, there exists $\overline{s} \in (0, 1)$ such that for every $s \in (0, \overline{s}]$ the following holds: 
	\begin{equation}
	\label{e:lav2d}
		\inf_{y\in W^{1,\infty} (\Om_{s}; \RR^{d}) \cap \cY_s}E_s(y) >
		\min_{y\in \cY_s}E_s(y).
	\end{equation}
\end{thm}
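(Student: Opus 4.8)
The strategy is to establish the inequality \eqref{e:lav2d} by bounding the two sides separately: I will exhibit an explicit competitor $y \in \cY_s$ whose energy is small (and crucially can be made as small as we like, scaling with $s$), while proving a uniform positive lower bound on the energy of \emph{any} Lipschitz map in $\cY_s$. The geometric heart of the matter is this: the boundary data $y_0$ forces the images of the two stripes $S_1, S_2$ to overlap in a cross, doubly covering the center square. A Sobolev competitor can resolve this by compressing a cross-section of one stripe to a line so that the two images meet along a one-dimensional self-contact set while remaining a.e.-injective and satisfying \eqref{CN}; a Lipschitz map cannot do this, since the everywhere-injectivity guaranteed by the finite-distortion argument (via the Reshetnyak theorem cited in the introduction) is incompatible with the topological obstruction imposed by $y_0$ on $\Gamma_s$.

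\emph{Upper bound.} I would first construct the Sobolev competitor. On each stripe, keep the map close to $y_0$ (hence close to the identity, where $W$ is minimized) outside a neighborhood of the overlap region, and interpolate near the center so that, say, $S_2$'s central cross-section is squeezed onto a segment to make room for $S_1$. The map must stay in $W^{1,p}_+$, so the compression has to be done with $\det \nabla y > 0$ a.e.\ and controlled $L^p$ gradient; the factor $(\det F)^{-q}$ in $W$ penalizes the squeezing, and this is exactly where the hypotheses $q \in (1, \tfrac{p}{p-2}]$ and $p > 2$ should be used to guarantee that the singular contribution remains integrable. I expect the total energy of this competitor to scale like a positive power of $s$, so it tends to $0$ as $s \to 0$. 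One then checks \eqref{CN}: since the self-contact is only a null set (a line), $|y(\Omega_s)|$ equals $\int \det \nabla y$, so the Ciarlet--Ne\v{c}as condition holds with equality or near-equality.

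\emph{Lower bound.} For the Lipschitz side I would argue that any $y \in W^{1,\infty}(\Om_s;\RR^d) \cap \cY_s$ is \emph{everywhere} injective: such $y$ has finite distortion with $K_y \in L^\infty$, so by the Reshetnyak-type result quoted in the introduction it is open and discrete, and combined with a.e.-injectivity (from \eqref{CN}) and the cited \cite[Lemma 3.3]{GraKruMaiSte19a} it is a global homeomorphism onto its image. But the boundary conditions on $\Gamma_s$ fix the outer edges of both stripes via $y_0$, and a degree/linking argument shows that a continuous injective extension \emph{cannot} avoid having the two images overlap in a set of positive measure — equivalently, the image $y(\Om_s)$ must genuinely contain the doubly-covered center, which for an everywhere-injective $y$ forces a nontrivial portion of the deformation gradient to be bounded away from the identity (indeed, $\|\nabla y\|_2$ must differ from $1$ on a set of definite measure, since the two stripes cannot both be near-isometric images in disjoint target regions while honoring the pinned boundary). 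Quantitatively, I would invoke Proposition~\ref{p:lower-bound}: on this set of definite size, $W(\nabla y) - W(\idmatrix) \geq c\,|\,\|\nabla y\|_2 - 1|^p + c(\|\nabla y\|_2 - 1)^2$ is bounded below, yielding a uniform positive lower bound $\delta_0 > 0$ on $E_s$ independent of (or lower-semicontinuous in) $s$ for $s$ small.

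\emph{Main obstacle.} The delicate point is the lower bound: making rigorous the claim that everywhere-injectivity plus the pinned boundary data forces a \emph{quantitative}, $s$-uniform amount of stretching (and not merely a nonzero amount that could degenerate as $s \to 0$). This requires a careful topological argument — likely via the degree of $y$ on the boundary curves of $S_1$ and $S_2$, or a linking invariant of the two image loops — to show that injectivity is genuinely obstructed by $y_0$, together with a compactness or direct geometric estimate certifying that the resulting energy cost cannot vanish. Matching this lower bound against the upper-bound scaling in $s$ then selects the threshold $\overline{s}$, and choosing $s \leq \overline{s}$ completes the strict inequality.
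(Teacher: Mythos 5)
Your overall architecture (an explicit low-energy Sobolev competitor versus a lower bound for Lipschitz competitors obtained from everywhere-injectivity) is the same as the paper's, but the quantitative comparison you propose contains a genuine gap that cannot be repaired in the form you state it. You claim that any $y\in W^{1,\infty}(\Om_s;\RR^2)\cap\cY_s$ has energy bounded below by a constant $\delta_0>0$ \emph{independent of $s$}, because injectivity forces $\norm{\nabla y}_2$ to differ from $1$ ``on a set of definite measure.'' This is false: all of the stretching takes place inside the two stripes, whose total measure is $|\Omega_s|=8s\to 0$, so no $s$-uniform lower bound can hold. Indeed, the paper exhibits an explicit Lipschitz map in $\cY_s$ (the identity on $S_1$, and on $S_2$ the bending $x\mapsto x-\xi+\big(\tfrac{1+s}{1-2s}(1-|x_2|),0\big)$ which routes the second stripe around the first) whose energy is at most $Ms$; hence the Lipschitz infimum itself tends to $0$ as $s\to 0$. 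The correct content of Proposition~\ref{p:lav-2d-2} is a lower bound of order $s$, not of order $1$: every cross-section curve $y(T_1^\sigma)$ or $y(T_2^\sigma)$ of an everywhere-injective competitor must travel around the image of the other stripe, hence has length at least $2\sqrt2(1-s)$, and Jensen's inequality combined with the quantitative convexity estimate of Proposition~\ref{p:lower-bound} converts this per-cross-section stretching into $E_s(y)\geq ms$.

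Because the Lipschitz infimum is genuinely of order $s$, your upper bound is also too weak: it is not enough that the Sobolev competitor's energy ``tends to $0$ as a positive power of $s$''; it must be $o(s)$, i.e.\ a power strictly greater than $1$. This is exactly where the hypotheses $p>2$, $q\leq\tfrac{p}{p-2}$ and the careful choice of the squeezing exponents enter: the paper takes $\tfrac{p-1}{p}<\alpha<\beta\leq 1$ and obtains energy $\lesssim s^{\gamma}$ with $\gamma=\min\{p\alpha-p+2,\,2+(1-\alpha)q,\,2\alpha+1\}>1$ (with a separate logarithmically corrected construction for the borderline case $q=\tfrac{p}{p-2}$). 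Without tracking this rate, your two estimates do not combine to give the strict inequality \eqref{e:lav2d} for a fixed small $s$. Two secondary slips: a Lipschitz competitor does \emph{not} have $K_y\in L^\infty$ (the determinant may degenerate); rather, finite energy gives $(\det\nabla y)^{-q}\in L^1$ and hence $K_y\in L^q$ with $q>d-1=1$, which is what the Reshetnyak-type theorem needs. And your statement that injectivity forces ``the two images \emph{to} overlap in a set of positive measure'' is self-contradictory: an everywhere-injective map sends the two disjoint components to disjoint sets; the topological obstruction is that avoiding any overlap forces the going-around described above, and it is the length of that detour, not a measure of overlap, that must be quantified.
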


The proof of Theorem~\ref{thm:lav2d} is a consequence of the following two propositions, which determine the asymptotic behavior of the minimum problems in~\eqref{e:lav2d}.

\begin{prop}
\label{p:lav-2d-1}
	Let $p \in (2, +\infty)$ and $q \in (1, \frac{p}{p-2}]$. Then 
	$\min_{y\in \cY_s} E_s(y)=o(s)$, i.e.,
	we have that
	\begin{equation}
	\label{e:step1}
		\lim_{s\searrow 0} \frac{1}{s}\inf_{y \in \cY_{s}} \,  E_{s}(y) =0.
	\end{equation}
\end{prop}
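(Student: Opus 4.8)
The plan is to prove the bound from above by constructing, for each small $s$, an explicit competitor $y_s\in\cY_s$ with $E_s(y_s)=O(s^2)$. Since property~(5) of Proposition~\ref{p:properties-W} gives $E_s\ge 0$, the minimum in~\eqref{e:step1} is nonnegative, so such a competitor already yields $0\le\tfrac1s\min_{\cY_s}E_s\le 2Cs\to 0$ and, incidentally, shows $\cY_s\neq\varnothing$ with finite infimum. The competitor realizes the ``single point of self-contact'' from the introduction: $y_s$ coincides with the (piecewise) identity $y_0$ away from the two stripe centers, while near each center it collapses the central cross-section of the stripe to the origin, forcing $y_s(S_1)$ into the horizontal cone $C_h:=\{\,(Y_1,Y_2):\abs{Y_2}\le\abs{Y_1}\,\}$ and $y_s(S_2)$ into the vertical cone $C_v:=\{\,(Y_1,Y_2):\abs{Y_1}\le\abs{Y_2}\,\}$.

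Concretely, on $Q_s:=(-s,s)^2\subset S_1$ I set $y_s(x):=s\,\Phi(x/s)$, with the fixed ``bowtie'' map $\Phi\colon(-1,1)^2\to\RR^2$, $\Phi(z_1,z_2):=\big(\operatorname{sgn}(z_1)\abs{z_1}^\mu,\ \abs{z_1}^\mu z_2\big)$, for an exponent $\mu\in(1-\tfrac1p,1)$ to be fixed. This $\Phi$ equals $\idmatrix$ on the lateral edges $\{z_1=\pm1\}$, collapses the segment $\{z_1=0\}$ to the origin, is a.e.\ injective, and has image in $C_h$ since $\abs{z_2}\le1$ forces $\abs{z_1}^\mu\abs{z_2}\le\abs{z_1}^\mu$. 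Because $\nabla y_s(x)=\nabla\Phi(x/s)$ is scale invariant and $\Phi=\idmatrix$ on $\{z_1=\pm1\}$, the formula glues continuously with $y_0$ across $\{x_1=\pm s\}$, while the free edges $\{x_2=\pm s\}$ carry no boundary condition. On $S_2$ I take the $90^\circ$-rotated version, collapsing the central cross-section $(4-s,4+s)\times\{0\}$ to the origin and landing in $C_v$. Using $s<1$ one checks that in fact all of $y_s(S_1)$ lies in $C_h$ and all of $y_s(S_2)$ in $C_v$; on the identity parts this is simply $\abs{x_2}<s\le\abs{x_1}$ and its rotation.

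Admissibility then follows: $y_s=y_0$ on $\Gamma_s$ because the pinches are supported in $\{\abs{x_1}<s\}$ (resp.\ $\{\abs{x_2}<s\}$), away from $\Gamma_s$; $\det\nabla y_s>0$ a.e., as it vanishes only on the two collapsed segments; and, crucially, $y_s(S_1)\cap y_s(S_2)\subseteq C_h\cap C_v=\{\abs{Y_1}=\abs{Y_2}\}$ is Lebesgue-null. Since each $y_s|_{S_i}$ is a.e.\ injective, the area formula gives $\int_{\Om_s}\det\nabla y_s=\abs{y_s(S_1)}+\abs{y_s(S_2)}=\abs{y_s(\Om_s)}$, so~\eqref{CN} holds with equality. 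Finally, the energy is computed by scaling, $\int_{Q_s}\big(W(\nabla y_s)-W(\idmatrix)\big)\,dx=s^2\int_{(-1,1)^2}\big(W(\nabla\Phi)-W(\idmatrix)\big)\,dz$, and similarly on $S_2$; as $y_s$ is the energy-free identity elsewhere, $E_s(y_s)=2s^2 C$ with $C:=\int_{(-1,1)^2}(W(\nabla\Phi)-W(\idmatrix))\,dz$.

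Thus everything reduces to the scale-free claim $C<\infty$, which is where the hypotheses on $p,q$ enter and where the main difficulty lies. For the above $\Phi$ one has $\det\nabla\Phi=\mu\abs{z_1}^{2\mu-1}$ and $\abs{\nabla\Phi}^2\asymp\abs{z_1}^{2\mu-2}$ near $\{z_1=0\}$, so the elastic term contributes $\int\abs{z_1}^{(\mu-1)p}\,dz$ and the compression term $\int\abs{z_1}^{-q(2\mu-1)}\,dz$. Both are finite exactly when $1-\tfrac1p<\mu<\tfrac12(1+\tfrac1q)$, and such a $\mu$ exists precisely when $1-\tfrac2p<\tfrac1q$, i.e.\ $q<\tfrac{p}{p-2}$. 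At the endpoint $q=\tfrac{p}{p-2}$ the admissible window closes to the single value $\mu=1-\tfrac1p$, at which both integrals are only logarithmically divergent; this case is recovered by replacing the pure power profile with a logarithmically damped one (inserting a factor like $\abs{\log\abs{z_1}}^{-b}$ with $b$ large). Balancing the blow-up of $\abs{\nabla\Phi}^p$ under stretching against that of $(\det\nabla\Phi)^{-q}$ under compression — and in particular treating this borderline case — is the heart of the argument; the remaining verifications are routine.
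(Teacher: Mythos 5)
Your construction for the sub\-critical range $q\in(1,\tfrac{p}{p-2})$ is correct, and it is essentially the paper's idea in a cleaner, scale\-invariant form: the paper also pinches the central cross-section with power profiles (components $\operatorname{sgn}(x_1)\abs{x_1}^{\alpha}$ and $\abs{x_1}^{\beta}x_2/s^{\beta}$) and verifies \eqref{CN} through injectivity off a null set, but it normalizes the first component differently and therefore needs a transition layer $S_1''=S_1\cap\{\abs{x_1}\ge s\}$ interpolating to the boundary datum, paying an extra $O(s^{1+2\alpha})$; your ansatz $y_s=s\Phi(x/s)$ glues directly to the identity, gives the sharper bound $E_s(y_s)=O(s^2)$, and makes the \eqref{CN} verification transparent via the two cones.

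However, the statement includes the endpoint $q=\tfrac{p}{p-2}$, and there your argument has a genuine gap: the proposed fix (inserting a factor $\abs{\log\abs{z_1}}^{-b}$ with $b$ large) cannot work, for a structural reason, not because of an unlucky choice of $b$. Damping the profile lowers the stretching term but \emph{raises} the compression term: with $f(t)=g(t)=t^{\mu}\abs{\log t}^{-b}$ and $\mu=1-\tfrac1p$ one gets $\det\nabla\Phi\approx \mu\,t^{2\mu-1}\abs{\log t}^{-2b}$, hence $(\det\nabla\Phi)^{-q}\approx t^{-1}\abs{\log t}^{2bq}$, whose integral diverges for every $b\ge 0$ (while $b<0$ makes $\abs{\nabla\Phi}^{p}$ non-integrable). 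In fact \emph{no} fixed profile $\Phi(z)=\big(\operatorname{sgn}(z_1)f(\abs{z_1}),\,g(\abs{z_1})z_2\big)$ can satisfy both integrability requirements at the endpoint. Indeed, \eqref{CN} for your symmetric two-cone construction forces $g\le f$ (otherwise the two images overlap in a set of positive measure around the diagonal), so $(f'g)^{-q}\ge (f'f)^{-q}$; writing $h=f^2$, finiteness of the two energy terms would give $\int_0^1 (h')^p h^{-p/2}\,dt<\infty$ and $\int_0^1 (h')^{-q}\,dt<\infty$. But exactly at $q=\tfrac{p}{p-2}$ the exponents $\tfrac p2$ and $q$ are H\"older conjugate, so
\begin{equation*}
+\infty=\int_0^1\frac{h'}{h}\,dt=\int_0^1\Big(\frac{(h')^p}{h^{p/2}}\Big)^{\frac2p}\big((h')^{-q}\big)^{\frac1q}\,dt
\le\Big(\int_0^1\frac{(h')^p}{h^{p/2}}\,dt\Big)^{\frac2p}\Big(\int_0^1 (h')^{-q}\,dt\Big)^{\frac1q},
\end{equation*}
where the left-hand side is infinite because $h(0)=0<h(1)$ (if $f$ vanishes on a set of positive measure, the compression term diverges trivially). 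This contradiction shows the scale-invariant ansatz, which necessarily has energy $Cs^2$ with a fixed constant $C$, is impossible at the endpoint. That is precisely why the paper's endpoint competitor $\hat y_{\alpha,\beta}$ is \emph{not} of scaling form: its second component carries the genuinely $s$-dependent factor $\abs{\ln s/\ln\abs{x_1}}^2$, and its energy is only $o(s)$ rather than $O(s^2)$, which still suffices for the proposition. To close the gap you must likewise let the profile depend on $s$; no fixed, logarithmically corrected $\Phi$ will do.
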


\begin{prop}
\label{p:lav-2d-2}
	Let $p \in (2, +\infty)$ and $q \in (1, \frac{p}{p-2}]$. Then, there exists $\overline{s} \in (0, 1)$ such that for every $s \in (0, \overline{s}]$,
	\begin{equation}	
		\label{e:step2}
		ms\leq \inf_{y \in W^{1, \infty}(\Omega_{s}; \RR^{2}) \cap \cY_{s}} \, E_{s}(y) \leq Ms.
	\end{equation}
	with constants $0<m<M <+\infty$ independent of $s$.
\end{prop}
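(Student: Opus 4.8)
The plan is to prove the two inequalities separately: the upper bound by an explicit Lipschitz competitor, and the lower bound by a topological ``no-crossing'' argument combined with the quantitative minimality of $\idmatrix$ from Proposition~\ref{p:lower-bound}. For the upper bound $\inf\le Ms$ I would exhibit a single admissible Lipschitz map. Keep $y:=y_0$ on $S_2$ (so $y(S_2)$ is the vertical bar $(-s,s)\times(-1,1)$ and the energy there vanishes), and on $S_1$ use a fixed vertical shear that lifts the horizontal strip over the bar, $y(x_1,x_2):=(x_1,x_2+h(x_1))$, with $h\in C^\infty([-1,1])$ independent of $s$, $h(\pm1)=0$, and $h\ge 2$ on $[-\tfrac12,\tfrac12]$. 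Then $\nabla y=\left(\begin{smallmatrix}1&0\\h'&1\end{smallmatrix}\right)$ has $\det\nabla y\equiv1$, the Dirichlet condition holds, and for $s\le\tfrac12$ the image $y(S_1)$ avoids $y(S_2)$ (when $x_1\in(-s,s)$ one has $y_2\ge -s+h(x_1)\ge 1$, and when $x_1\notin(-s,s)$ one has $y_1=x_1\notin(-s,s)$). Hence $y$ is injective, so \eqref{CN} holds, and $E_s(y)=2s\int_{-1}^1\big(W(\nabla y)-W(\idmatrix)\big)\,dx_1=Ms$ with $M$ finite and independent of $s$.

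For the lower bound $\inf\ge ms$ I only consider $y$ with $E_s(y)<\infty$. Such a Lipschitz $y$ satisfying \eqref{CN} is injective on $\Om_s$: finiteness of the energy forces $(\det\nabla y)^{-1}\in L^q$, whence $K_y=\abs{\nabla y}^2/\det\nabla y\in L^q$ since $\nabla y\in L^\infty$; as $q>1=d-1$, the Reshetnyak/finite-distortion chain recalled in the introduction applies and a.e.-injectivity upgrades to injectivity. The key geometric step is a dichotomy. Write the horizontal fibres $c_t(x_1):=y(x_1,t)$ for $t\in(-s,s)$ (running from $(-1,t)$ to $(1,t)$ by the boundary data) and the vertical fibres $d_\tau(x_2):=y(4+\tau,x_2)$ for $\tau\in(-s,s)$ (running from $(\tau,-1)$ to $(\tau,1)$). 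Fix $t,\tau$ and suppose, for contradiction, that $c_t\subset\{\abs{y_2}<1\}$ and $d_\tau\subset\{\abs{y_1}<1\}$. Applying the Poincar\'e--Miranda theorem to $F(x_1,x_2):=c_t(x_1)-d_\tau(x_2)$ on $[-1,1]^2$ — the boundary values give $F^{(1)}<0$ on $\{x_1=-1\}$, $F^{(1)}>0$ on $\{x_1=1\}$, $F^{(2)}>0$ on $\{x_2=-1\}$, $F^{(2)}<0$ on $\{x_2=1\}$ — yields an \emph{interior} zero, i.e.\ $c_t(x_1^*)=d_\tau(x_2^*)$ with $(x_1^*,x_2^*)\in(-1,1)^2$. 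This contradicts injectivity, as the two preimages lie in $S_1$ and $S_2$. Hence for every pair $(t,\tau)$ either $c_t$ reaches $\{\abs{y_2}\ge1\}$ or $d_\tau$ reaches $\{\abs{y_1}\ge1\}$; consequently \emph{either} every horizontal fibre reaches height $\abs{y_2}\ge1$, \emph{or} every vertical fibre reaches $\abs{y_1}\ge1$ (if some $c_{t_0}$ stays in $\{\abs{y_2}<1\}$, the pairwise statement forces all $d_\tau$ to reach $\{\abs{y_1}\ge1\}$).

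It then remains to convert this forced excursion into energy. Treating the first alternative (the second is symmetric), each $c_t$ joins two points of height $\abs{y_2}=\abs t\le s$ while touching $\{\abs{y_2}\ge1\}$, so a reflection argument gives $\mathrm{length}(c_t)\ge 2\sqrt{1+(1-s)^2}=:\ell_s>2$. Since $\mathrm{length}(c_t)=\int_{-1}^1\abs{\partial_{x_1}y(x_1,t)}\,dx_1\le\int_{-1}^1\norm{\nabla y(x_1,t)}_2\,dx_1$, the average of $\norm{\nabla y(\cdot,t)}_2$ over $[-1,1]$ exceeds $\ell_s/2>1$. Applying Proposition~\ref{p:lower-bound} pointwise and Jensen's inequality to the convex function $r\mapsto c\abs{r-1}^p+c(r-1)^2$, which is increasing on $[1,\infty)$, I obtain $\int_{-1}^1\big(W(\nabla y(x_1,t))-W(\idmatrix)\big)\,dx_1\ge\kappa_s$ with $\kappa_s=2\big(c(\tfrac{\ell_s}2-1)^2+c(\tfrac{\ell_s}2-1)^p\big)>0$ bounded below uniformly for $s\le\bar s$. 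Integrating over $t\in(-s,s)$ gives $E_s(y)\ge\int_{S_1}\big(W(\nabla y)-W(\idmatrix)\big)\,dx\ge 2s\kappa_s\ge ms$.

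The main obstacle is the geometric step in the lower bound: for a general, possibly wildly wandering competitor one cannot argue that $y(S_2)$ stays near the vertical bar, so a naive separation argument fails. The Poincar\'e--Miranda dichotomy is what makes the argument robust, since it uses only continuity, injectivity (for disjointness of the two fibre families) and the pinned endpoints, and it locates the crossing in the interior, thereby sidestepping any boundary-regularity subtleties.
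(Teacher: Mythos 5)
Your proposal is correct, and its overall skeleton matches the paper's: an explicit Lipschitz competitor for the upper bound, injectivity of finite-energy Lipschitz competitors via the finite-distortion/Reshetnyak argument plus \eqref{CN}, a topological argument forcing horizontal or vertical fibres to make a long excursion, and finally Proposition~\ref{p:lower-bound} combined with Jensen's inequality. The interesting divergence is in the topological step. The paper's two-dimensional proof pairs the fibres $T_1^\sigma$ and $T_2^\sigma$ with the \emph{same} parameter $\sigma$, splits into cases \eqref{i} and \eqref{ii}, and rules out case \eqref{ii} by appealing to injectivity of open maps, leaving the actual curve-crossing argument implicit; it then partitions $(-s,s)$ into measurable sets $A$ and $B$ according to which fibre stretches. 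You instead apply the Poincar\'e--Miranda theorem to $F(x_1,x_2)=c_t(x_1)-d_\tau(x_2)$ for \emph{arbitrary} pairs $(t,\tau)$ --- which is precisely the device the paper reserves for its three-dimensional proof (Proposition~\ref{p:lav3d-2}) --- and you upgrade the pairwise statement to a clean all-or-nothing dichotomy: either every horizontal fibre reaches $\{|y_2|\ge 1\}$ or every vertical fibre reaches $\{|y_1|\ge 1\}$, so no measurable partition is needed. Your per-fibre length bound $2\sqrt{1+(1-s)^2}$ is slightly weaker than the paper's $2\sqrt{2}(1-s)$ (you only require the excursion to touch $\{|y_2|\ge1\}$ somewhere, not on the specific vertical line $\{z_1=\sigma\}$), but it is bounded away from $2$ uniformly in $s$, which is all that matters. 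What your route buys is rigor and economy: the crossing argument is made fully explicit by a standard fixed-point-type theorem, and the Jensen step is handled with the correct normalization (the paper's displayed estimate is slightly sloppy on this point, though harmless). What the paper's route buys is structural parallelism with its own 3D argument, where the $A$/$B$-type decomposition of the parameter interval is genuinely needed. Both upper-bound competitors (your shear of $S_1$ over the fixed bar $y_0(S_2)$, versus the paper's horizontal displacement of $S_2$ around the fixed $S_1$) are equally valid.
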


We start with the proof of Proposition~\ref{p:lav-2d-1}.

\begin{proof}[Proof of Proposition~\ref{p:lav-2d-1}] 
In order to prove~\eqref{e:step1} we explicitly construct a deformation~$y_{\alpha,\beta}$ forming a cross with self-intersection. By squeezing with suitable rate two central cross-sections to a point, which will be the only point of intersection in $y_{\alpha,\beta}(\Omega)$, we produce an almost-minimizer of~$E_{s}$ in $\cY_s$. 

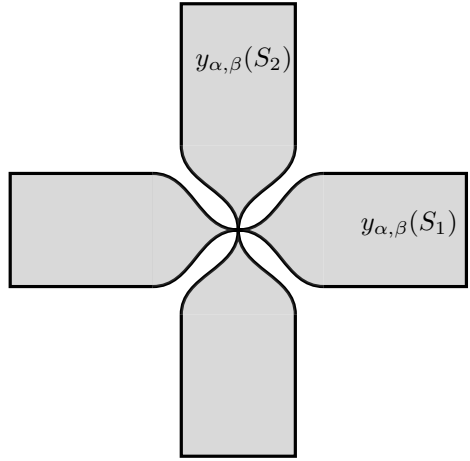
\begin{figure}[h!]
	\begin{tikzpicture}[scale = 1.5, every node/.style={font=\footnotesize}]
		\fill[fill= gray, fill opacity=0.3] (-2,-0.5) rectangle (-0.75,0.5);
		\fill[fill= gray, fill opacity=0.3] (0.75,0.5) rectangle (2,-0.5);
		\draw[very thick] (-0.75,0.5) -- (-2,0.5) -- (-2,-0.5) -- (-0.75,-0.5);
		\draw[very thick] (0.75,0.5) -- (2,0.5) -- (2,-0.5) -- (0.75,-0.5);

		\fill[fill= gray, fill opacity=0.3] (-0.5,-2) rectangle (0.5,-0.75);
		\fill[fill= gray, fill opacity=0.3] (-0.5,2) rectangle (0.5,0.75);
		\draw[very thick] (-0.5,-0.75) -- (-0.5,-2) -- (0.5,-2) -- (0.5,-0.75);
		\draw[very thick] (-0.5,0.75) -- (-0.5,2) -- (0.5,2) -- (0.5,0.75);

		\draw[very thick, smooth] (-0.75, -0.5) to [in=180,out=0] (0,0) to [in=0,out=180] (-0.75, 0.5);
		\fill[fill= gray, fill opacity=0.3] (-0.75, -0.5) to [in=180,out=0] (0,0) to [in=0,out=180] (-0.75, 0.5);

		\draw[very thick, smooth] (0.75, -0.5) to [in=0,out=180] (0,0) to [in=180,out=0] (0.75, 0.5);
		\fill[fill= gray, fill opacity=0.3] (0.75, -0.5) to [in=0,out=180] (0,0) to [in=180,out=0] (0.75, 0.5);

		\draw[very thick, smooth] (-0.5, -0.75) to [in=-90,out=90] (0,0) to [in=90,out=-90] (0.5, -0.75);
		\fill[fill= gray, fill opacity=0.3] (-0.5, -0.75) to [in=-90,out=90] (0,0) to [in=90,out=-90] (0.5, -0.75);

		\draw[very thick, smooth] (-0.5, 0.75) to [in=90,out=-90] (0,0) to [in=-90,out=90] (0.5, 0.75);
		\fill[fill= gray, fill opacity=0.3] (-0.5, 0.75) to [in=90,out=-90] (0,0) to [in=-90,out=90] (0.5, 0.75);

		\node at (1.5, 0.05) {$y_{\alpha, \beta}(S_{1})$};
		\node at (0.05, 1.5) {$y_{\alpha, \beta}(S_{2})$};
	\end{tikzpicture}
	\caption{$y_{\alpha, \beta}(S_{1})$ and $y_{\alpha, \beta}(S_2)$.} \label{f:cross}
\end{figure}

We start with the case $q \in (1, \frac{p}{p-2})$. We \RRR divide \EEE $S_1$ into two \RRR subsets \EEE $S'_1=S_1\cap\{|x_1|\leq s\}$ and $S''_1=S_1\cap\{|x_1|\geq s\}$ and fix $\frac{p-1}{p}<\alpha<\beta\leq1$. For $x\in S'_1$ we set
\begin{displaymath}
	y_{\alpha,\beta}(x):=\vect{\displaystyle \frac{x_1}{\abs{x_1}^{1-\alpha}} \\[5mm] \displaystyle  \frac{\abs{x_1}^\beta}{s^\beta} x_2},\quad\text{whence}\quad \nabla y_{\alpha,\beta}(x)=
	\left(\begin{array}{cc}
		\displaystyle \frac{\alpha}{\abs{x_1}^{1-\alpha}}  & 0	\\[5mm] 
		\displaystyle \frac{\beta}{s^\beta} {\abs{x_1}^{\beta-2} x_1 x_2}  & \displaystyle\frac{\abs{x_1}^\beta}{s^\beta}
	\end{array}\right).
\end{displaymath}
For $x\in S''_1$ we connect $y_{\alpha, \beta}$ to the boundary datum $y_{0}$ as follows:
\begin{displaymath}
	y_{\alpha,\beta}(x):=\vect{\displaystyle \frac{x_{1}}{|x_{1}|} \, \left(\frac{1-s^\alpha}{1-s} ( |x_1| - 1) + 1\right) \\[4mm] x_2},\quad\text{whence}\quad \nabla y_{\alpha,\beta}(x)=
	\left(\begin{array}{cc}
		\displaystyle \frac{1-s^\alpha}{1-s} & 0	\\[4mm] 
		0  & 1
	\end{array}\right).
\end{displaymath}
In particular, $\det\nabla y_{\alpha,\beta}=\frac{1-s^\alpha}{1-s} >0$ in $S''_1$ and $\det\nabla y_{\alpha,\beta}=\frac{\alpha}{s^\beta} \abs{x_1}^{\alpha+\beta-1} >0$ a.e.~in $S'_1$ and
\begin{displaymath}
	\frac{1}{(\det\nabla y_{\alpha,\beta})^q}=
	\begin{cases}
		\displaystyle \frac{s^{\beta q}}{\alpha^q}\abs{x_1}^{(1-\alpha-\beta) q}, &\text{if } x\in S'_1,\\[3mm] 
		\displaystyle\Big( \frac{1-s}{1-s^\alpha}\Big) ^{q}, &\text{if } x\in S''_1,\\
	\end{cases}
\end{displaymath}
Moreover, for $x\in S'_1$ we have that
\begin{displaymath}
	|\nabla y_{\alpha,\beta}| = \left(\frac{\alpha^2}{\abs{x_1}^{2(1-\alpha)}} + \frac{\beta^2}{s^{2\beta}}\abs{x_1}^{2\beta - 2} \abs{x_2}^2 + \frac{\abs{x_1}^{2\beta}}{s^{2\beta}}\right)^{\frac{1}{2}}
	\leq \frac{\alpha}{\abs{x_1}^{1-\alpha}} + \beta \frac{ s^{1 - \beta }}{|x_{1}|^{1 - \beta }}  +1.
\end{displaymath}
Thus, $(\det\nabla y_{\alpha,\beta})^{-q}+|\nabla y_{\alpha,\beta}|^p\in L^1(S_1)$ as long as 
\begin{align}\label{p-q-alpha}
	(1-\alpha -\beta)q>-1, \quad p(\alpha-1)>-1, \quad p(\beta - 1) >-1.   
\end{align}
Such restrictions on $\alpha$ and $\beta$ can be satisfied  whenever $ q \in (1, \frac{p}{p-2})$ by choosing $\alpha\in(\frac{p-1}{p},1)$ and $\beta \in (\alpha,1]$ accordingly.   

We now estimate the behavior of the energy $E_{s} (y_{\alpha, \beta})$ as $s\to 0$. Below, the symbol~$\lesssim$ stands for an inequality up to a positive multiplicative constant independent of $s\in (0,1]$ and $x\in S_1$. We further write~$\approx$ if such inequalities hold in both directions.
By minimality of the identity matrix, by definition of~$W$, and by construction of~$y_{\alpha,\beta}$ on~$S'_{1}$, we have that
\begin{equation}\label{W-upperboundcross}
\begin{aligned}
	0& \leq W(\nabla y_{\alpha,\beta}) - W(\idmatrix)  \lesssim |\nabla y_{\alpha,\beta}|^p+\frac{1}{(\det\nabla y_{\alpha,\beta})^q} 
	\\
	&
	\lesssim \abs{x_1}^{p(\alpha-1)} + \beta  s^{p(1 - \beta)} |x_{1} |^{p( \beta-1) }  + 1 + s^{\beta q} |x_1|^{(1-\alpha-\beta) q} . 
\end{aligned}
\end{equation}
Moreover, since $0<\alpha<1$, the mean value theorem gives
\begin{displaymath}
	\frac{1-s^\alpha}{1-s}-1 \approx s^{\alpha}+ s \approx s^{\alpha}.
\end{displaymath}
This means that on $S_1''$ it holds that $\abs{\nabla y_{\alpha,\beta} -\idmatrix}\lesssim s^{\alpha}$ uniformly in $x$.
By Taylor expansion of $W$ at $\idmatrix$ (where $DW(\idmatrix)=0$ by definition of $\gamma$), we infer that
\begin{align}\label{W-upperboundrest}
	0\leq W(\nabla y_{\alpha,\beta}) - W(\idmatrix)\lesssim s^{2\alpha}\quad\text{on}~S_1''.
\end{align}

Combining \eqref{W-upperboundcross} and \eqref{W-upperboundrest},
we obtain the following upper bound for the energy for all sufficiently small $s$
as long as \eqref{p-q-alpha} holds:
\begin{align}
\label{e:estimate-S1}
	\int_{S_{1}} & W(\nabla y_{\alpha,\beta}) - W(\idmatrix) \, dx 
	\\
	& \lesssim  \int_{S'_1} 
	\left(\abs{x_1}^{p(\alpha-1)}+ s^{p(1 - \beta)} |x_{1} |^{p( \beta-1) } + s^{\beta q} |x_1|^{(1-\alpha-\beta) q} +1\right)\,dx  + \int_{S''_1} s^{2\alpha} \,dx	 \nonumber\\
	& = \frac{4s \cdot s^{p\alpha-p+1}}{1+p(\alpha-1)}+  \frac{4s^{2}}{1+p(\beta-1)}+ \frac{4s \cdot s^{\beta q}  \cdot s^{1+(1-\alpha -\beta) q}}{1+(1-\alpha-\beta) q}+4s^2+4s(1-s)s^{2\alpha} \nonumber \\
	&\vphantom{\int_{S'_{1}}} \approx s^{p\alpha-p+2}+s^2+s^{2+(1-\alpha) q}+s^{2\alpha+1}. \nonumber
\end{align}
Setting $\gamma:=\min\{p\alpha - p +2, 2+(1-\alpha) q, 2\alpha+1\}$, since $\frac{p-1}{p}<\alpha<\beta \leq 1$ we have that $\gamma>1$. By~\eqref{e:estimate-S1} we conclude that
\begin{equation}
\label{e:energy-S1}
	\int_{S_{1}} W(\nabla y_{\alpha,\beta}) - W(\idmatrix) \, dx \lesssim s^\gamma.
\end{equation}

For $x \in S_{2}$, we extend~$y_{\alpha,\beta}$ with a suitable shifted copy. With a slight abuse of notation, we set
\begin{displaymath}
	y_{\alpha,\beta}(x):=Q y_{\alpha,\beta} (Q^{T}(x - \xi)),
\end{displaymath}
where $Q$ and~$\xi$ are given by~\eqref{def:omega_s}. It is straightforward that~$y_{\alpha,\beta}$ is injective on 
$\Omega_s\setminus (\{0\}\times (-s,s)\cup (4-s,4+s)\times \{0\})$
while $y_{\alpha,\beta}(\{0\}\times (-s,s)) = y_{\alpha,\beta}((4-s,4+s)\times \{0\}) = \{0\}$.
By the change-of-variables formula for Sobolev mappings, $y_{\alpha,\beta}$ satisfies \eqref{CN}. Clearly, the estimate~\eqref{e:energy-S1} holds true also on~$S_{2}$. This concludes the proof of~\eqref{e:step1} for $q \in (1, \frac{p}{p-2})$.   

To cover $q=\frac{p}{p-2}$ we need to consider a slightly different example. With the same notation introduced above for $S_{1}'$ and $S_{1}''$, we set   
\begin{displaymath}
	\hat{y}_{\alpha, \beta}   (x):=\vect{\displaystyle\frac{x_1}{\abs{x_1}^{1-\alpha} \abs{\ln{ | x_1 | }}} \\[5mm] 
	\displaystyle \abs{\frac{\ln{s}}{\ln{| x_1 | }}}^2  \cdot \frac{\abs{x_1}^{\beta}}{s^{\beta}} x_2}
	\quad \text{for} \: x\in S_1',
\end{displaymath}
and for $x\in S_1''$
\begin{displaymath}
	\hat{y}_{\alpha, \beta} (x):=\vect{\displaystyle \frac{x_{1}}{|x_{1}|} \, \left(\frac{1-\frac{s^\alpha}{\abs{\ln s}}}{1-s} ( |x_1| - 1) + 1\right) \\[4mm] x_2}.
\end{displaymath}

It is straightforward to check that in $S'_1$
\begin{displaymath}
	|\nabla \hat{y}_{\alpha, \beta} |^p \lesssim
	\frac{\abs{x_1}^{p(\alpha - 1)}}{  \abs{\ln{  | x_1 |   }}^p }  
	+ \frac{ \abs{x_1}^{p(\alpha - 1)}}{ \abs{\ln{  | x_1 |   }}^{2p}}  + \frac{|x_{1}|^{p(\beta-1)}  | \ln s|^{2p} }{s^{(\beta-1) p} | \ln |x_{1}| |^{2p}} + \frac{|x_{1}|^{p(\beta-1)}  | \ln s|^{2p} }{s^{(\beta-1) p} | \ln |x_{1}| |^{3p}}
	+ \frac{|x_{1}|^{\beta p} | \ln s|^{2p}}{ s^{\beta p} | \ln | x_{1}| |^{2p}}.   
\end{displaymath}
and
\begin{displaymath}
	(\det\nabla \hat{y}_{\alpha, \beta}   )^{-q} \lesssim \frac{s^{q\beta}}{|\ln s|^{2q}}  \abs{x_1}^{q(1-\alpha-\beta)} | \ln |x_{1}| |^{3q}.
\end{displaymath}
We have  
$(\det\nabla \hat{y}_{\alpha, \beta}   )^{-q}+|\nabla \hat{y}_{\alpha, \beta} |^p\in L^1(S_1)$ 
if 
$\beta \geq \alpha$, 
$(1-\alpha -\beta)q\geq -1$ and $p(\alpha-1)\geq -1$.
Moreover, if $\beta = \alpha = \frac{p-1}{p}$ and $q=\frac{p}{p-2}$
\begin{align*}
	\int_{S_{1}} W(\nabla y_{\alpha,\beta}) - W(\idmatrix) \, dx & 
	\lesssim  
 o(s). 
\end{align*}
On $S_{1}''$ we can repeat the argument of~\eqref{W-upperboundrest}. This concludes the proof of the proposition.   
\end{proof}

\begin{proof}[Proof of Proposition~\ref{p:lav-2d-2}]
With an explicit construction of a competitor $y \in W^{1, \infty}(\Om_{s}; \RR^{2}) \cap \cY_{s}$ as
\begin{displaymath}
	y(x) := \left\{ 
	\begin{array}{ll}
		x & \text{for $x \in S_{1}$,}\\[2mm]
		x - \xi + 
		\left(
		\begin{matrix}
			\frac{1+s}{1-2s}(1-|x_2|)\\
			0
		\end{matrix}
		\right) 
		& \text{for $x \in S_{2}$},
	\end{array}\right.
\end{displaymath}
one can show that there exists $M>0$ such that
\begin{displaymath}
	\min_{y\in W^{1,\infty} (\Om_{s}; \RR^{d}) \cap \cY_s} E_s(y)\leq Ms .
\end{displaymath}

Let us now fix $y \in W^{1, \infty}(\Om_{s}; \RR^{2}) \cap \cY_{s}$ with finite energy~$E_{s} (y)$. We claim that for a.e.~$\sigma \in (-s, s)$ one of the following \RRR inequalities are \EEE satisfied:
\begin{subequations}
\label{neq:stretching}
	\begin{align}
		& \int_{-1}^{1}  | \partial_{x_{1}} y(x_{1}, \sigma) | \, dx_{1}  \geq  2\sqrt{2} ( 1 - s), \label{e:either-1}\\
		& \int_{-1}^{1}  | \partial_{x_{2}} y(4 + \sigma, x_{2}) | \, dx_{2}  \geq  2\sqrt{2} ( 1 - s).  \label{e:either-2}
	\end{align}
\end{subequations}

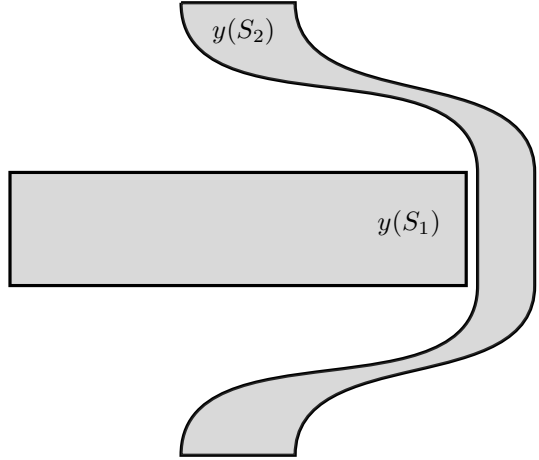
\begin{figure}[h!]
	\begin{tikzpicture}[scale = 1.5, every node/.style={font=\footnotesize}]
		\filldraw[very thick, fill= gray, fill opacity=0.3] (-2,-0.5) rectangle (2,0.5);
		\draw[very thick, smooth] (-0.5,2) to [in=90,out=-90] (2.1,0.5) -- (2.1,-0.5) to [in=90,out=-90] (-0.5,-2) -- (0.5,-2) to [in=-90,out=90] (2.6,-0.5) -- (2.6,0.5) to [in=-90,out=90] (0.5,2) -- (-0.5,2);
		\fill[fill= gray, fill opacity=0.3] (-0.5,2) to [in=90,out=-90] (2.1,0.5) -- (2.1,-0.5) to [in=90,out=-90] (-0.5,-2) -- (0.5,-2) to [in=-90,out=90] (2.6,-0.5) -- (2.6,0.5) to [in=-90,out=90] (0.5,2);

		\node at (1.5, 0.05) {$y(S_{1})$};
		\node at (0.05, 1.75) {$y(S_{2})$};
	\end{tikzpicture}
	\caption{Injective everywhere deformation.}
	\label{f:around}
\end{figure}

For $\sigma \in (-s, s)$, let us denote by $T_1^{\sigma} := (-1, 1) \times \{\sigma\}$ and $T^{\sigma}_2:= \{4 + \sigma\} \times (-1, 1)$ the sections of each stripe. By the boundary conditions and continuity of $y$, for every $\sigma, \zeta \in (-s, s)$ the curve $y(T^{\sigma}_1)$ has to intersect the line $\{z\in\RR^2\mid z_1=\zeta\}$. Similarly, $y(T^{\sigma}_{2})$ has to intersect  $\{z\in\RR^2\mid z_2 = \zeta\}$ (see also Fig.~\ref{f:1}). For $\sigma \in (-s, s)$, we distinguish two cases: 
\begin{enumerate}[label=(\roman*),ref=\roman*]
\item \label{i}$y(T^{\sigma}_1)$ intersects $\{\sigma\} \times ( (-\infty, -1] \cup [1, +\infty))$ or $y(T^{\sigma}_2)$ intersects  $( (-\infty, -1] \cup [1, +\infty)) \times \{\sigma\}$;
\item \label{ii} $y(T^{\sigma}_1)$ and $y(T^{\sigma}_2)$ only intersect $\{\sigma\} \times (-1, 1)$ and $(-1, 1) \times \{\sigma\}$, respectively.
\end{enumerate}

Denoting by~$K (x,y(x) )$ the distorsion of~$y\in W^{1, \infty}(\Om_{s}; \RR^{2}) \cap \cY_{s}$ in $x \in \Om_{s}$ 
\begin{equation}\label{distortion2d}
K (x,y(x) ) := \frac{| \nabla y (x) |^{2}}{(\det \nabla y(x))}, 
\end{equation}
we notice that~$y$ satisfies 
\begin{displaymath}
	\int_{\Omega_s} K^q (x,y) \, dx  \lesssim \|\nabla y\|_{L^\infty}^{2q} \, E_s(y) <\infty,
\end{displaymath}
with $q>d-1=1$. Since~$y$ is nonconstant, due to the boundary data, by the Reshetnyak theorem~\cite{ManVill1998} for mappings of finite distortion, $y$ is open and discrete. Moreover, any open map that is injective almost everywhere is indeed injective everywhere
(as pointed out in \cite[Lemma 3.3]{GraKruMaiSte19a}). Hence,  the case~\eqref{ii} is impossible, 
and the general deofrmation is pictured in Fig.~\ref{f:around}.

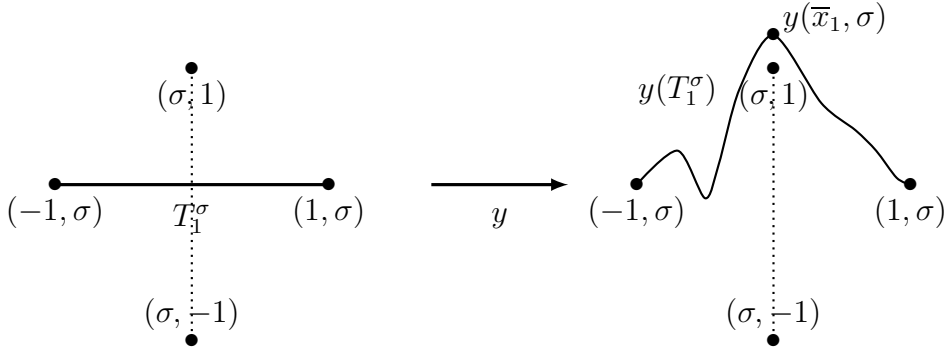
\begin{figure}[h!]
	\begin{tikzpicture}[xscale=0.9,yscale=0.9, >=latex]
		\draw[black, very thick] (0,0.3) -- (4,0.3);
		\draw[black, thick, dotted] (2,2) -- (2,-2);
		\node at (0, -0.1) {$(-1, \sigma)$};
		\node at (4, -0.1) {$(1, \sigma)$};
		\node at (2, -0.2) {$T^{\sigma}_{1}$};
		\node at (2,2) {\textbullet};
		\node at (2,-2) {\textbullet};
		\node at (0,0.3) {\textbullet};
		\node at (4,0.3) {\textbullet};
		\node at (2, 1.6) {$(\sigma, 1)$};
		\node at (2, -1.6) {$(\sigma, -1)$};
		\draw[black, ->, very thick] (5.5,0.3) -- (7.5, 0.3);
		\node at (6.5, -0.2) {{$y$}};
		\node at (9.1, 1.7) {$y(T^{\sigma}_{1})$};
		\node at (8.5,0.3) {\textbullet};
		\node at (12.5,0.3) {\textbullet};
		\node at (10.5,2) {\textbullet};
		\node at (10.5,-2) {\textbullet};
		\draw[thick, smooth] plot coordinates
		            {
		                (8.5, 0.3)
		                (9.1, 0.8)
		                (9.5, 0.1)
		                (9.7, 0.6)
		                (10, 1.7)
		                (10.5, 2.5)
		                (11.2, 1.5) 
		                (11.7, 1.1)
		                (12, 0.8)
		                (12.3, 0.4)
		                (12.5, 0.3) 
		            };
		\draw[black, thick, dotted] (10.5,2) -- (10.5,-2);
		\node at (10.5, 1.6) {$(\sigma, 1)$};
		\node at (10.5, -1.6) {$(\sigma, -1)$};
		\node  at (8.5, -0.1) {$(-1, \sigma)$};
		\node  at (12.5, -0.1) {$(1, \sigma)$};
		\node at (10.5, 2.5) {\textbullet};
		\node at (11.4, 2.75) {$y (\overline{x}_{1}, \sigma)$};
	\end{tikzpicture}
	\caption{Graphic representation of $y(T^{\sigma}_{1})$ satisfying \eqref{i}.} \label{f:1}
\end{figure}

Therefore, for every $\sigma \in (-s, s)$ we are in the case~\eqref{i}.
For every $\sigma \in (-s, s)$ such that the integrals in~\eqref{neq:stretching} are well defined, we may assume without loss of generality that~$y(T^{\sigma}_{1}) \cap [ \{\sigma\} \times [1, +\infty) ] \neq \varnothing$ (the other cases can be treated similarly), and let 
$\overline{x}_{1} \in (-1, 1)$ be such that $y( \overline{x}_{1}, \sigma)  \in y(T^{\sigma}_{1}) \cap [ \{\sigma\} \times [1, +\infty)]$. Since the shortest path connecting $(-1, \sigma)$ to the point~$y( \overline{x}_{1}, \sigma)$ is the segment, by the boundary conditions of~$y$ we have that 
\begin{align}
	\label{e:length}
	\int_{-1}^{\overline{x}_{1}} | \partial_{x_{1}} y(x_{1}, \sigma) | \, dx_{1} & \geq  \sqrt{ 2} (1 - \sigma) \geq \sqrt{2} ( 1 - s).  
\end{align}
With the same argument, we deduce that
\begin{align}
	\label{e:length-2}
	\int_{\overline{x}_{1}}^{1} | \partial_{x_{1}} y(x_{1}, \sigma) | \, dx_{1} & \geq  \sqrt{2} ( 1 - s). 
\end{align} 
Combining~\eqref{e:length}--\eqref{e:length-2} we obtain~\eqref{e:either-1}. If $y(T^{\sigma}_{2})$ intersects $((-\infty, -1] \cup [1, +\infty)) \times \{\sigma\}$, the same argument leads to~\eqref{e:either-2}.

We are now in a position to conclude for~\eqref{e:step2}. We define the sets
\begin{align*}
	A &:= \mysetr{ x_2 \in (-s,s) }{ \text{\eqref{e:either-1} is satisfied}},\\
	B & := \mysetr{ x_{1} \in (-s, s) \setminus A}{ \text{\eqref{e:either-2} is satisfied}}.
\end{align*}
In view of~\eqref{neq:stretching}, we have that $A \cup B = (-s, s)$, up to a set of $\mathcal{L}^{1}$-measure zero. Moreover, $A \cap B = \varnothing$. By~\eqref{Wlowerbound} we estimate (recall that $\| \cdot\|_{2}$ denotes the operator norm)
\begin{align}
\label{e:en-estimate}
	\int_{\Omega_{s}} & (W(\nabla y) - W( \idmatrix) )\,  d x 
	\\
	&
	= \int_{-s}^{s} \int_{-1}^{1} (W(\nabla y) - W(\idmatrix) )\, d x_{1} \, dx_{2} +  \int_{4-s}^{4+s} \int_{-1}^{1} (W(\nabla y)  - W(\idmatrix) )\, d x_{2} \, dx_{1} \nonumber
	\\
	&
	\geq c \int_{A} \int_{-1}^{1}  \abs{\norm{\nabla y}_2-1}^p +  (\norm{\nabla y}_2-1)^2\, d x_{1} \, dx_{2} \nonumber
	\\
	&
	\qquad + c \int_{4 + B} \int_{-1}^{1}  \abs{\norm{\nabla y}_2-1}^p + (\norm{\nabla y}_2-1)^2\,d x_{2} \, dx_{1}. \nonumber
	\end{align}
	Thanks to the Jensen inequality, to~\eqref{neq:stretching}, and to the definition of~$A$ and~$B$, we continue in~\eqref{e:en-estimate} with
	\begin{align*}
	\int_{\Omega_{s}} & (W(\nabla y) - W( \idmatrix) ) d x 
	\\
	&
	\geq c \int_{A} \int_{-1}^{1}  \abs{| \partial_{x_{1}} y| - 1}^p + (| \partial_{x_{1}} y| - 1)^2\, d x_{1} \, dx_{2} \nonumber
	\\
	&
	\qquad + c \int_{4 + B} \int_{-1}^{1}  \abs{| \partial_{x_{2}} y| - 1}^p + (| \partial_{x_{2}} y| - 1)^2\,d x_{2} \, dx_{1} \nonumber
	\\
	&
	\geq c \int_{A} \abs{\int_{-1}^{1} | \partial_{x_{1}} y|  \, dx_{1} - 1}^{p} + \Big( \int_{-1}^{1} | \partial_{x_{1}} y| \, dx_{1} - 1 \Big)^{2} \, dx_{2} \nonumber
	\\
	&
	\qquad + c \int_{4 + B}  \abs{ \int_{-1}^{1}  | \partial_{x_{2}} y|\, dx_{2}  - 1}^p + \Big(\int_{-1}^{1} | \partial_{x_{2}} y| \, dx_{2}  - 1\Big)^2  \, dx_{1} \nonumber
	\\
	&\geq \,c\,  (|A| + |B|)  \big[  \big( 2\sqrt{2} ( 1 - s)  - 1\big)^{p} +\big( 2\sqrt{2} ( 1 - s ) - 1 \big)^{2} \big] \geq  m s + o(s)
\end{align*}
for some positive constant~$m$ independent of~$y$ and of~$s$. This concludes the proof of~\eqref{e:step2}.
\end{proof}

\begin{rem}\label{rem:1}
	The Lavrentiev phenomenon is valid even if we replace $W^{1, \infty} (\Om_{s}; \RR^{2})$ with $W^{1,r} (\Om_{s}; \RR^{2})$ for $r>\frac{2q}{q-1}$. 
	In this case, we have that for $y \in W^{1,r} (\Om_{s}; \RR^{2})\cap \cY_{s}$ with $E_{s} (y) <+\infty$,  the distortion coefficient $K(x,y)$ defined in \eqref{distortion2d}, belongs to~$L^{\eta} (\Om_{s})$ for~$\eta:=\frac{rq}{2q+r}$, $\eta \in (1, q)$. Indeed, by H\"older inequality it holds
	\begin{equation}\label{neq:distprtion-int}
		\int_{\Omega_{s}} \left(\frac{|\nabla y  |^2}{\det \nabla y  }\right)^\eta \, dx \leq 
		\left(\int_{\Omega_{s}} |\nabla y  |^{2\eta\frac{q}{q-\eta}} \, dx\right)^{\frac{q-\eta}{q}} 
		\left(\int_{\Omega_{s}} \frac{dx}{\det \nabla y ^{\eta \frac{q}{\eta}}}\right)^{\frac{\eta}{q}} 
		<\infty,
	\end{equation}
	since $r=\frac{2q\eta}{q-\eta}$ and $E_{s} (y) <+\infty$. This implies that any competitor $y \in W^{1,r} (\Om_{s}; \RR^{2})\cap \cY_{s}$ with finite energy must satisfy~\eqref{i} for every $\sigma \in (-s, s)$. Then, the proof of the lower bound of~$E_{s}(y)$ proceeds as in the $W^{1,\infty}$-case.
\end{rem}

\begin{rem}\label{rem:2}
	The argument in Remark~\ref{rem:1} also shows that the two-dimensional example in Proposition~\ref{p:lav-2d-1} is optimal in the following sense: if $p>2$ and $q > \frac{p}{p-2}$, then every $y \in W^{1, p} (\Om_{s}; \RR^{2}) \cap \cY_{s}$ with finite energy satisfies~$K_{y} \in L^{\eta} (\Om_{s})$ for $\eta =   \frac{pq}{2q+p} > 1$ (see  \eqref{neq:distprtion-int}). Hence, $y$ has to be injective. This would rule out the example constructed in the proof of Proposition~\ref{p:lav-2d-1}.
\end{rem}

\section{The Lavrentiev phenomenon in dimension three}
\label{sec:Lav3d}

In this section, we show a three-dimensional generalization of the Lavrentiev phenomenon proven in Theorem~\ref{thm:lav2d}. 
  The example is created by simply thickening the two-dimensional version in another direction, corresponding to the variable $x_1$ below, 
while $(x_2,x_3)$ correspond to the two variables of the 2D example.   

For $s \in (0, 1)$, the reference configuration~$\Omega_{s}$ consists now of the union of two thin cuboids of width~$s$. Namely, we write
\begin{equation}\label{def:omega_s-3D}
\begin{split}
	\Omega_s & :=S_1 \cup S_2,\\
	S_1 & :=(-1,1)\times (-1, 1) \times (-s,s) , \qquad S_2:=\xi + Q S_1\, , \\
	Q & :=\left(\begin{array}{rrr}1 & 0 & 0\\ 0 & 0 & -1 \\ 0 & 1 & 0 \end{array}\right), \qquad  \xi:=\vect{0 \\ 4 \\ 0}.
\end{split}
\end{equation}
We consider the Dirichlet datum
\begin{displaymath}
	y_0(x):=\begin{cases}
	x & \text{for $x\in \overline{S_1}$,}\\
	x-\xi &\text{for $x\in \overline{S_2}$,}
	\end{cases}
\end{displaymath}
and the set of admissible deformations
\begin{displaymath}
	\cY_s:=\mysetr{y\in W_+^{1,p}(\Omega_s;\RR^3)}{\text{\eqref{CN} holds and $y=y_0$ on $\Gamma_s$}},
\end{displaymath}
where
\begin{displaymath}
	\Gamma_s:= \big( [-1, 1] \times \{-1,1\}\times [-s,s] \big) \cup \big( [-1, 1] \times [4-s,4+s] \times \{-1,1\} \big) \subset \partial\Omega_s.
\end{displaymath}

Similar to Theorem~\ref{thm:lav2d}, we have the Lavrentiev phenomenon in the following form.

\begin{theorem}
	\label{thm:lav3d}
	For every $p \in (3, 4)$ and every $q \in (2, \frac{p}{p-2})$ there exists $\overline{s} \in (0, 1]$ such that for every $s \in (0, \overline{s}]$ the following holds:
	\begin{align}
	\label{e:l2-3d}
		\inf_{y \in W^{1, \infty}(\Omega_{s}; \RR^{3}) \cap \cY_{s}} \, E_{s}(y) > \inf_{y \in \cY_{s}} \,  E_{s}(y) . 
	\end{align}
\end{theorem}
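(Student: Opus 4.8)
The plan is to mirror the structure used for Theorem~\ref{thm:lav2d}: I would split the claim~\eqref{e:l2-3d} into an upper bound $\inf_{\cY_s}E_s=o(s)$, produced by an explicit self-intersecting competitor, and a lower bound $\inf_{W^{1,\infty}\cap\cY_s}E_s\ge ms$ valid on the globally injective class, so that $o(s)<ms$ for small $s$ yields the gap. The four inequalities defining the admissible range are precisely the obstructions one meets: $p>d=3$ guarantees $W^{1,p}\hookrightarrow C^0$, so finite-energy maps are continuous; $q>d-1=2$ is what the Reshetnyak theorem~\cite{ManVill1998} needs to upgrade a.e.-injectivity to everywhere-injectivity in the lower bound; $q<\tfrac{p}{p-2}$ is the integrability threshold for the singular competitor; and these last two are compatible only if $p<4$, which is why the statement restricts to $p\in(3,4)$.

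For the upper bound I would thicken the two-dimensional competitor in the $x_1$-direction, letting $(x_2,x_3)$ play the role of the old $(x_1,x_2)$. On $S_1'=S_1\cap\{|x_2|\le s\}$ set
\[
 y_{\alpha,\beta}(x):=\Big(x_1,\ \tfrac{x_2}{|x_2|^{1-\alpha}},\ \tfrac{|x_2|^\beta}{s^\beta}\,x_3\Big),\qquad \tfrac{p-1}{p}<\alpha\le\beta\le 1,
\]
extend it to $S_1''$ and across $\Gamma_s$ as in Proposition~\ref{p:lav-2d-1}, and define it on $S_2$ by the rotated--shifted copy $Q\,y_{\alpha,\beta}(Q^{\mathrm T}(x-\xi))$ with $Q,\xi$ from~\eqref{def:omega_s-3D}. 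Since the first component is the identity, $\det\nabla y_{\alpha,\beta}$ and the singular entries of $\nabla y_{\alpha,\beta}$ coincide with the planar ones, so the integrability conditions reduce to $p(\alpha-1)>-1$, $p(\beta-1)>-1$, $(1-\alpha-\beta)q>-1$, solvable exactly when $q<\tfrac{p}{p-2}$ (here the open interval lets me avoid the logarithmic correction needed at the endpoint in the plane). The extra $x_1$-slab has length of order one while the genuinely thin direction $x_3$ still contributes a factor $s$, so the energy reproduces the planar exponents,
\[
 \int_{S_1}\!\big(W(\nabla y_{\alpha,\beta})-W(\idmatrix)\big)\,dx\ \lesssim\ s^{p\alpha-p+2}+s^2+s^{2+(1-\alpha)q}+s^{2\alpha+1}=O(s^\gamma),\quad \gamma>1,
\]
hence $o(s)$, and the same holds on $S_2$. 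This map squeezes the two central cross-sections $\{x_2=0\}\cap S_1$ and $\{x_2=4\}\cap S_2$ onto the common segment $(-1,1)\times\{0\}\times\{0\}$, its only line of non-injectivity, and satisfies~\eqref{CN} by the change-of-variables formula exactly as in the plane.

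The lower bound is where the genuinely three-dimensional difficulty lies, and I expect it to be the main obstacle. As before, any $y\in W^{1,\infty}\cap\cY_s$ with finite energy has distortion $K_y=\tfrac{|\nabla y|^3}{\det\nabla y}\in L^q$ with $q>2=d-1$, since $\int_{\Omega_s}K_y^{q}\lesssim\|\nabla y\|_{L^\infty}^{3q}\,E_s(y)<\infty$; by the Reshetnyak theorem~\cite{ManVill1998} $y$ is open and discrete, and being a.e.-injective it is injective everywhere \cite[Lemma 3.3]{GraKruMaiSte19a}, hence a homeomorphism onto its image. The crux is the quantitative stretching estimate for the sections. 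Fix the thick coordinate $x_1=t\in(-1,1)$ and consider the $S_1$-section curve $C=y(\{t\}\times(-1,1)\times\{\sigma\})$, which joins $(t,-1,\sigma)$ to $(t,1,\sigma)$ and so spans $z_2\in[-1,1]$, together with the $S_2$-slice surface $\Sigma_2^t=y(\{t\}\times(4-s,4+s)\times(-1,1))$, whose Dirichlet edges are pinned at $z_3=\pm1$ so that it spans $z_3\in[-1,1]$. The decisive new point is that the planar crossing lemma of Proposition~\ref{p:lav-2d-2} cannot be transplanted verbatim, because two curves in $\RR^3$ generically miss each other. Instead I would intersect the one-dimensional arc $C$ (from $S_1$) with the two-dimensional membrane $\Sigma_2^t$ (from $S_2$): in the confined configuration, where neither section escapes to $\{|z_2|\ge1\}$ resp.\ $\{|z_3|\ge1\}$, the unstretched $\Sigma_2^t$ behaves like the rigid datum-membrane $y_0(\{t\}\times(4-s,4+s)\times(-1,1))$, a flat sheet spanning $z_3\in[-1,1]$ near the axis, which the arc $C$ is forced to pierce; then $C\cap\Sigma_2^t\ne\varnothing$, contradicting injectivity. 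Making ``forced to pierce'' rigorous is the heart of the matter: I would express it as the non-vanishing of a $\mathbb Z_2$-intersection number of $C$ with a capped-off $\Sigma_2^t$, evaluated from the pinned Dirichlet edges, the delicate part being to control the two \emph{free} faces of $\Sigma_2^t$ (the thin faces $x_2=4\pm s$ of $S_2$) so that the capping introduces no spurious crossings. Equivalently, and more geometrically, in the confined case $y(S_2)$ would have to pierce the solid $y(S_1)$ exactly as the datum $y_0$ does through the central rod, contradicting $y(S_1)\cap y(S_2)=\varnothing$.

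Granting this crossing, the conclusion follows the planar template. For a.e.\ section one of the two stretching bounds---the three-dimensional analogues of~\eqref{e:either-1} and~\eqref{e:either-2}, e.g.\ $\int_{-1}^1|\partial_{x_2}y(t,\cdot,\sigma)|\,dx_2\ge 2\sqrt2(1-s)$---holds, the shortest-path estimate being unchanged once the image is known to reach $\{|z_i|\ge1\}$. Splitting the parameter set $(-1,1)\times(-s,s)$ into the two cases as in the planar proof and using $\|\nabla y\|_2\ge|\partial_{x_i}y|$, the quantitative minimality~\eqref{Wlowerbound} of Proposition~\ref{p:lower-bound}, and Jensen's inequality for the convex integrand $\tau\mapsto|\tau-1|^p+(\tau-1)^2$, I obtain
\[
 \int_{\Omega_s}\!\big(W(\nabla y)-W(\idmatrix)\big)\,dx\ \ge\ c\,(|A|+|B|)\Big[\big(2\sqrt2(1-s)-1\big)^p+\big(2\sqrt2(1-s)-1\big)^2\Big]\ \ge\ ms,
\]
where $|A|+|B|$ exhausts $(-1,1)\times(-s,s)$ up to a null set, so the thin direction still yields the factor $s$ (the thick direction only an order-one factor) and the bracket is bounded below for small $s$. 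Finally, the matching bound $\inf_{W^{1,\infty}\cap\cY_s}E_s\le Ms$ comes from the thickened wrap-around competitor: keep $y=\mathrm{id}$ on $S_1$ and set $y(x)=x-\xi+\big(0,\tfrac{1+s}{1-2s}(1-|x_3|),0\big)$ on $S_2$; this is an injective shear avoiding $S_1$ (its midsection is pushed to $z_2=\tfrac{1+2s^2}{1-2s}>1$), with $\det\nabla y\equiv1$ and $|\nabla y-\idmatrix|=O(1)$, so its energy density is $O(1)$ over the slab $S_2$ of measure $O(s)$, giving $E_s=O(s)$. Combining the two bounds yields~\eqref{e:l2-3d}.
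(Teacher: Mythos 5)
Your upper bound is exactly the paper's: the thickened competitor coincides with the one in Proposition~\ref{p:lav3d-1} (same formulas, same integrability window, nonempty precisely when $q<\frac{p}{p-2}$), and your Lipschitz shear is the competitor of Lemma~\ref{p:proof2}. The gap is in the lower bound, and it sits exactly where you flag the ``heart of the matter'' --- but the route you propose does not close it.

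The dichotomy you transplant from the plane --- for each pair $(t,\sigma)$, either an escape to $\{|z_3|\ge 1\}$ (resp.\ $\{|z_2|\ge 1\}$) with shortest-path cost $2\sqrt{2}(1-s)$, or confinement and hence a forced crossing --- is false in $\RR^3$. The obstruction is the $z_1$ direction, which your confinement hypothesis does not see: an $S_1$-fiber $\{t\}\times(-1,1)\times\{\sigma\}$ can cross the plane $\{z_2=0\}$ at a point with $|z_1|\ge 1$, i.e.\ dodge around the lateral face of $y(S_2)$, without ever reaching $\{|z_3|\ge 1\}$; for $|t|$ close to $1$ this costs arbitrarily little, since the image length need only exceed $2\sqrt{1+(1-|t|)^2}$, so neither your escape alternative nor your claimed uniform bound $\int_{-1}^1|\partial_{x_2}y|\,dx_2\ge 2\sqrt{2}(1-s)$ holds for such fibers, and they intersect nothing. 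For the same reason your intersection pairing cannot be made to work: you intersect the $S_1$-curve and the $S_2$-membrane taken at the \emph{same} parameter $x_1=t$, but $y$ does not preserve $x_1$, so the membrane at parameter $t$ can slide in the $z_1$ direction away from the curve at parameter $t$; moreover its two long faces ($x_2=4\pm s$) are free, so a $\mathbb{Z}_2$-intersection number against any capping is not homotopy invariant without quantitative control of precisely the kind that qualitative confinement does not provide. This is why the paper replaces the qualitative dichotomy by a quantitative one: Lemma~\ref{p:proof3} shows that a bounded slice energy (giving Morrey--H\"older equicontinuity) together with a small stretching functional $\int_{-1}^1\bigl|\int_{-1}^1(|\partial_2 y|-1)\,dx_2\bigr|^p dx_1\le\varepsilon$ forces the \emph{entire} $S_1$ cross-section $\{x_3=\sigma\}$ --- a full two-dimensional square spanning all of $x_1$, not a single fiber --- to lie within $\overline{c}_{N,p}\,\varepsilon^{\gamma/(p+1)}$ of the flat set $\{(x_1,\pi(y_2),\sigma)\}$, and likewise a single $S_2$-fiber to lie near $\{(0,\sigma,\pi(y_3))\}$; crucially this pins down the \emph{first} components of both. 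Only then does a degree argument (Poincar\'e--Miranda applied to $g(x_1,\tau_1,\tau_2)=y(x_1,\tau_1,\sigma)-y(0,\sigma+4,-\tau_2)$ on $[-1,1]^3$) force a crossing, contradicting the Reshetnyak-based everywhere-injectivity. The price is that one only obtains the small threshold $\varepsilon$ (not $2\sqrt{2}(1-s)-1$) on a.e.\ good slice, which still yields $E_s(y)\ge c\,\varepsilon\,s$ by the same Jensen argument you use. Without a rigidity statement of this type --- the genuinely new ingredient of the three-dimensional proof --- your ``forced to pierce'' step, and with it the lower bound in~\eqref{e:l2-3d}, remains unproved.
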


The proof of Theorem~\ref{thm:lav3d} is subdivided into two propositions given below. Compared to the two-dimension case, 
we now have to face an additional difficulty, because ``fully going around'' (case \eqref{i} in the proof of  Proposition~\ref{p:lav-2d-2}) is no longer the only way the two pieces can avoid each other after deformation. 
In principle, it should be possible to generalize our three-dimensional example to any dimension $d\geq 3$, but for the sake of simplicity, we 
will stick to $d=3$, the practically most relevant case.

\begin{prop}
\label{p:lav3d-1}
	For every $p \in (3, 4)$ and every $q \in (2, \frac{p}{p-2})$, 
	$\inf_{y \in \cY_{s}} \,  E_{s}(y)=o(s)$, i.e,
	\begin{equation}
	\label{e:3d-step1} 
		\lim_{s\searrow 0} \frac{1}{s}\inf_{y \in \cY_{s}} \,  E_{s}(y) =0.   
	\end{equation}
\end{prop}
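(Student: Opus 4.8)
The plan is to reproduce the two-dimensional competitor of Proposition~\ref{p:lav-2d-1}, fibered over the new direction $x_1\in(-1,1)$: I leave $x_1$ undeformed and let the 2D construction act on the pair $(x_2,x_3)$, with $x_2$ the ``long'' crossing direction (of range $(-1,1)$, playing the role of the former $x_1^{\mathrm{2D}}$) and $x_3$ the thin direction. Since $q<\frac{p}{p-2}$ I can first fix $\frac{p-1}{p}<\alpha<\beta\le1$ with $\alpha+\beta<1+\frac1q$ (possible exactly because $2\frac{p-1}{p}<1+\frac1q$ precisely when $q<\frac{p}{p-2}$). Splitting $S_1$ into $S_1':=S_1\cap\{|x_2|\le s\}$ and $S_1'':=S_1\cap\{|x_2|\ge s\}$, I would set on $S_1'$
\[
  y_{\alpha,\beta}(x):=\left(x_1,\ \frac{x_2}{|x_2|^{1-\alpha}},\ \frac{|x_2|^{\beta}}{s^{\beta}}\,x_3\right),
\]
interpolating linearly in the middle component on $S_1''$ so as to connect continuously to the boundary datum $y_0=\idmatrix$ on $\{|x_2|=1\}$, and extend to $S_2$ by the shifted rotated copy $y_{\alpha,\beta}(x):=Qy_{\alpha,\beta}(Q^T(x-\xi))$. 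The central slice $\{x_2=0\}$ of $S_1$ (and its analogue in $S_2$) is thereby collapsed to the segment $(-1,1)\times\{(0,0)\}$ of the $x_1$-axis, which is the intended line of non-injectivity.

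Next I would verify admissibility. A direct computation gives a block-triangular Jacobian with $\det\nabla y_{\alpha,\beta}=\frac{\alpha}{s^{\beta}}|x_2|^{\alpha+\beta-1}>0$ a.e.\ on $S_1'$ (the identity $x_1$-row contributing a unit factor), so $y_{\alpha,\beta}\in W^{1,p}_+$ once the entries are $p$-integrable. Bounding $|x_3|\le s$, the relevant entries are controlled by $|x_2|^{p(\alpha-1)}$, $s^{p(1-\beta)}|x_2|^{p(\beta-1)}$ and constants, while $(\det\nabla y_{\alpha,\beta})^{-q}\lesssim s^{\beta q}|x_2|^{(1-\alpha-\beta)q}$; integrability over $S_1'$ then reduces to $p(\alpha-1)>-1$, $p(\beta-1)>-1$ and $(1-\alpha-\beta)q>-1$, all ensured by the choice of $\alpha,\beta$. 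Because the construction is the 2D one fibered over $x_1$, a.e.-injectivity of $y_{\alpha,\beta}$ follows fiberwise from Proposition~\ref{p:lav-2d-1}, the only doubly covered set being the $x_1$-axis segment (of measure zero), so \eqref{CN} holds by the change-of-variables formula.

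For the energy I would simply integrate the bounds above. The only structural change from 2D is an extra factor $s$ from integrating over $x_3\in(-s,s)$ (the $x_1$-integration contributing only a constant), together with the Taylor estimate $W(\nabla y_{\alpha,\beta})-W(\idmatrix)\lesssim s^{2\alpha}$ on $S_1''$, whose volume is $\approx s$. Over $S_1'=(-1,1)\times(-s,s)\times(-s,s)$ this yields
\[
  \int_{S_1}\bigl(W(\nabla y_{\alpha,\beta})-W(\idmatrix)\bigr)\,dx
  \lesssim s^{p\alpha-p+2}+s^2+s^{2+(1-\alpha)q}+s^{2\alpha+1}.
\]
Since $\alpha>\frac{p-1}{p}$, every exponent exceeds $1$, so the right-hand side is $o(s)$; the same bound holds on $S_2$ by symmetry. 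Hence $E_s(y_{\alpha,\beta})=o(s)$, which gives \eqref{e:3d-step1}.

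The main obstacle I expect is not analytic but geometric: one must confirm that after the collapse the images of $S_1$ and $S_2$ meet \emph{only} along the $x_1$-axis segment, rather than overlapping on a set of positive measure (the whole point being that here two cross-sections are squeezed to a \emph{line} instead of to a point as in 2D). Viewing the competitor as the 2D ``pinched cross'' extruded along the undeformed $x_1$-direction should make this transparent, reducing the check to the 2D separation already used in Proposition~\ref{p:lav-2d-1}. Finally, since the hypothesis gives the open range $q\in(2,\frac{p}{p-2})$, no logarithmic borderline construction (needed for $q=\frac{p}{p-2}$ in the 2D proof) is required here.
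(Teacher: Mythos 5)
Your proposal is correct and is essentially identical to the paper's own proof: the same fibered construction (identity in $x_1$, the 2D pinched map in $(x_2,x_3)$ split over $S_1'=\{|x_2|\le s\}$ and $S_1''$, rotated copy on $S_2$), the same integrability conditions $p(\alpha-1)>-1$, $p(\beta-1)>-1$, $(1-\alpha-\beta)q>-1$, and the same energy exponents $s^{p\alpha-p+2}+s^2+s^{2+(1-\alpha)q}+s^{2\alpha+1}$, all exceeding $1$. Your explicit verification that $q<\frac{p}{p-2}$ permits the choice of $\alpha,\beta$, and your observation that the images of $S_1$ and $S_2$ meet only along the collapsed $x_1$-axis segment, are exactly the points the paper also relies on.
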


\begin{prop}
\label{p:lav3d-2}
	For every $p \in (3, 4)$ and every $q \in (2, \frac{p}{p-2})$, 
	there exists $\overline{s} \in (0, 1)$ such that for every $s \in (0, \overline{s}]$
	\begin{equation}
	\label{e:3d-step2}  
		ms\leq \inf_{y \in W^{1, \infty}(\Omega_{s}; \RR^{3}) \cap \cY_{s}} \, E_{s}(y) \leq Ms
	\end{equation}
	with constants $0<m<M<+\infty$ independent of $s$.
\end{prop}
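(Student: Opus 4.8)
The strategy mirrors the two-dimensional Proposition~\ref{p:lav-2d-2}, but I must replace the planar ``winding-around'' dichotomy by a genuinely three-dimensional argument. The upper bound $\inf E_s \le Ms$ is the easy half: I would exhibit an explicit Lipschitz competitor that pushes $S_2$ sideways (in the new $x_1$-direction) to clear $S_1$, exactly as the cross-section competitor $y(x)=x-\xi+\bigl(\tfrac{1+s}{1-2s}(1-|x_3|),0,0\bigr)^T$ in the 2D case but now using the extra coordinate. Since $\nabla y$ differs from $\idmatrix$ only by an $O(1)$ shear on $S_2$, the integrand $W(\nabla y)-W(\idmatrix)$ is bounded uniformly, and integrating over a domain of measure $\approx s$ gives the bound $Ms$. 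I must check this $y$ is injective and orientation-preserving (hence satisfies \eqref{CN}); this is a direct computation of the determinant of the shear.

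\textbf{The lower bound.} The heart is the estimate $\inf_{W^{1,\infty}\cap\cY_s} E_s \ge ms$. As in 2D, any $y\in W^{1,\infty}\cap\cY_s$ with finite energy has distortion in $L^q$ with $q>2=d-1$, so by the Reshetnyak theorem \cite{ManVill1998} combined with \cite[Lemma 3.3]{GraKruMaiSte19a}, $y$ is injective \emph{everywhere}. This rules out the degenerate self-contact and forces the deformed pieces $y(S_1)$ and $y(S_2)$ to avoid each other as embedded slabs. The plan is to foliate each slab into fibers: for $(\sigma,\tau)\in(-1,1)\times(-s,s)$ consider the fiber $T^{\sigma,\tau}_1:=(-1,1)\times\{\sigma\}\times\{\tau\}$ of $S_1$, and symmetrically for $S_2$. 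I would argue that for a.e.\ choice of the transversal parameters, either the $S_1$-fiber or the $S_2$-fiber must be stretched by a definite factor, yielding an inequality of the form \eqref{neq:stretching}: the integral of $|\partial_{x_1}y|$ (resp.\ the corresponding derivative on $S_2$) along the fiber is $\ge 2\sqrt2(1-s)$. Given such a pointwise-in-fiber length bound, the conclusion follows verbatim from the 2D computation: apply Proposition~\ref{p:lower-bound} to bound $W(\nabla y)-W(\idmatrix)$ from below by $c\,\bigl(|\,\|\nabla y\|_2-1|^p+(\|\nabla y\|_2-1)^2\bigr)$, drop to the relevant directional derivative, use Jensen's inequality along each fiber, and integrate over the set of transversal parameters, whose measure is $\approx s$.

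\textbf{The main obstacle} is precisely the topological step flagged in the text just before the proposition: in three dimensions ``fully going around'' is no longer the only escape route, so I cannot simply intersect a deformed curve with a separating line. Instead I expect to need a linking/degree argument. Concretely, the boundary datum on $\Gamma_s$ fixes the four ``caps'' of each slab, and the two slabs are interlocked like links of a chain in the reference target geometry of $y_0$; injectivity of $y$ together with the prescribed boundary values should force a nontrivial linking number between the images of the two central two-dimensional cross-sections (the faces $\{0\}\times(-1,1)\times(-s,s)$ of $S_1$ and its $S_2$-counterpart). This topological obstruction is what replaces the elementary intersection argument and should yield the stretching inequality \eqref{e:either-1}--\eqref{e:either-2} for one of the two fibers. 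Turning the linking statement into the quantitative length bound $2\sqrt2(1-s)$ is the delicate point: one shows that an embedded disk spanning one slab, held fixed on its boundary by $y_0$, cannot avoid the other slab without one of its fibers traversing a path of length at least that of the straight segment between the fixed endpoints, from which the factor $\sqrt2$ (coming from the diagonal displacement imposed by the boundary data) and hence $m>0$ emerges.
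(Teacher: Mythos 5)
Your upper bound and your use of finite-distortion theory (distortion in $L^q$ with $q>2$, hence everywhere-injectivity via \cite{ManVill1998} and \cite[Lemma 3.3]{GraKruMaiSte19a}) match the paper, and your final Jensen-plus-coercivity step is the right closing move. But the core of your lower bound rests on a topological claim that is false as stated: injectivity and the boundary data alone do \emph{not} force any intersection or ``nontrivial linking'' between the images of the two central cross-sections. The Lipschitz competitor from the upper bound (Lemma~\ref{p:proof2}) is injective, satisfies the boundary conditions, and has disjoint images, so whatever obstruction exists must be \emph{conditional}; moreover, two $2$-dimensional surfaces in $\RR^3$ have no linking number --- the meaningful pairing is a curve in one piece against a surface in the other. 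This is exactly how the paper proceeds, and it is the step your proposal is missing: Lemma~\ref{p:proof3} uses the slice energy bound (Morrey embedding on $2$D slices, giving a uniform H\"older constant) together with \emph{smallness} of the integrated stretching to conclude that both slices lie within $\tfrac13$ of the flattened configuration \eqref{e:y12}; only under that closeness does the Poincar\'e--Miranda/degree argument applied to $g(x_1,\tau_1,\tau_2)=y(x_1,\tau_1,\sigma)-y(0,\sigma+4,-\tau_2)$ produce a zero, contradicting everywhere-injectivity. The dichotomy that results is therefore the weak, $\varepsilon$-level one \eqref{e:claim-eps-1}--\eqref{e:claim-eps-2} (an $L^p$-integrated stretching of size $\varepsilon$, with $m\propto\varepsilon$), not the strong pointwise fiber-length bound $2\sqrt2(1-s)$ you assert. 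For the strong bound you offer no mechanism, and it cannot be obtained by treating one slab as a fixed obstacle that ``an embedded disk cannot avoid'': both slabs deform simultaneously, and burden-sharing configurations (e.g.\ $S_1$ contracting in the free $x_1$-direction into a thin wall while $S_2$ deviates around it) are precisely what such an argument must exclude --- the paper sidesteps this entirely by only ever proving the conditional, $\varepsilon$-level statement.

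Two further concrete errors. First, your fibers $T^{\sigma,\tau}_1=(-1,1)\times\{\sigma\}\times\{\tau\}$ run in the $x_1$-direction, whose end faces $x_1=\pm1$ are \emph{not} in $\Gamma_s$; since their endpoints are free, no length lower bound along them can hold (a deformation may contract them almost freely). The pinned directions are $x_2$ on $S_1$ and $x_3$ on $S_2$, so the stretching inequalities must involve $\partial_2 y$ and $\partial_3 y$, as in \eqref{e:claim-eps-1}--\eqref{e:claim-eps-2}. Second, your upper-bound competitor $y(x)=x-\xi+\bigl(\tfrac{1+s}{1-2s}(1-|x_3|),0,0\bigr)^T$ pushes in $x_1$, but $y_0(S_1)$ spans $z_1\in(-1,1)$, so a shift of at least $2$ is needed to clear it; with amplitude $\tfrac{1+s}{1-2s}\approx 1$ the images still overlap and the map is not injective. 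The paper pushes in $x_2$ instead, where the span to clear is only $(-s,s)$ and this amplitude suffices; alternatively your construction works after enlarging the constant, so this gap, unlike the first, is cosmetic.
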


We start with the proof of Proposition~\ref{p:lav3d-1}.

\begin{proof}[Proof of Proposition~\ref{p:lav3d-1}]
As in the proof of Proposition~\ref{p:lav-2d-1}, it is enough to construct a sequence of competitors~$y^{s} \in \cY_{s}$ satisfying $E_{s} (y^{s}) =o(s)$ as $s \searrow 0$. To this purpose, let us fix $\alpha, \beta \in (0, 1)$ (to be determined later on) and let us define $S'_{1}:= \{ x \in S_{1}: | x_{2} | \leq s\}$, $S'_{2}:= \{x \in S_{2}: | x_{3}| \leq s\}$, and $S''_{i} := S_{i} \setminus S'_{i}$ for $i = 1, 2$. 

In order to prove the asymptotic~\eqref{e:3d-step1} we define the map $y_{\alpha, \beta}\colon \Omega_{s} \to \RR^{3}$ as
\begin{align*}
	y_{\alpha, \beta} (x) & := \left( \begin{array}{ccc}
	x_{1}
	\\[1mm]
	\displaystyle \frac{x_{2}}{| x_{2} |^{1 - \alpha}}
	\\[5mm]
	\displaystyle\frac{|x_{2}|^{\beta}}{s^{\beta}} \, x_{3}
	\end{array}\right) \qquad \text{for $x \in S'_{1}$},\\[2mm]
	y_{\alpha, \beta} (x) & :=   \left( \begin{array}{ccc}
	x_{1}
	\\[2mm]
	\displaystyle \frac{x_{2}}{|x_{2}|} \, \left( \frac{1-s^\alpha}{1-s} ( | x_{2} | - 1 ) + 1\right) 
	\\[2mm]
	 x_{3}
	\end{array}\right) \qquad \text{for $x \in S''_{1}$},\\[2mm]
	y_{\alpha, \beta} (x) & :=Q y_{\alpha, \beta} (Q^{T}( x - \xi ) ) \qquad \text{for $x \in S_{2}$}.
\end{align*}
To show that $y_{\alpha, \beta} \in \cY_{s}$ for $s$ small, we have to show that $\nabla y_{\alpha, \beta} \in L^{p}(\Omega_{s}; \RR^{3\times 3})$. We  focus on~$S_{1}$, as the definition of~$y_{\alpha, \beta}$ leads to the same computations on~$S_{2}$. By construction of $y_{\alpha, \beta}$, on~$S_{1}$ we have that
\begin{align*}
	\nabla y_{\alpha, \beta} (x) & = \left( 
	\begin{array}{ccc}
		1 & 0 & 0\\
		0 & \alpha | x_{2}|^{\alpha - 1} & 0\\
		0 & \frac{\beta}{s^{\beta}} |x_{2}|^{\beta - 2} x_{2} x_{3} & \frac{|x_{2}|^{\beta}}{s^{\beta}} 
	\end{array}\right) \qquad \text{for $x \in S'_{1}$},\\[2mm]
	\nabla y_{\alpha, \beta} (x) & = \left( 
	\begin{array}{ccc}
		1 & 0 & 0\\
		0 & \frac{1 - s^{\alpha}}{1 - s} & 0\\
		0 & 0 & 1 
	\end{array}\right) \qquad \text{for $x \in S''_{1}$}.\\
\end{align*}
Imposing $\nabla y_{\alpha, \beta} \in L^{p}(S_{1};\RR^{3\times 3})$ implies that 
\begin{align}
\label{e:alphabeta}
	\alpha, \beta > 1 - \frac{1}{p}.
\end{align}

We notice that 
\begin{align*}
	\det \nabla y_{\alpha, \beta} (x)  & = \frac{\alpha  | x_{2}|^{\alpha + \beta - 1} }{s^{\beta} }\qquad \text{in $S'_{1}$} ,\\
	\det \nabla y_{\alpha, \beta}(x)  & =  \frac{ 1- s^{\alpha} }{1 - s}\qquad \text{in $S''_{1}$},
\end{align*}
so that $\det \nabla y_{\alpha, \beta} >0$ on $\Omega_{s}$. As in the proof of Theorem~\ref{thm:lav2d}, $y_{\alpha, \beta}$ is injective on $\Omega_{s} \setminus \big( (-1, 1) \times \{0 \} \times (-s,s) \cup (-1, 1) \times (4-s,4+s) \times \{0 \} \big)$ and
while $y_{\alpha, \beta} ((-1, 1) \times \{0 \}\times (-s,s)) = y_{\alpha, \beta} ((-1, 1) \times (4-s,4+s)\times \{0\}) = (-1, 1) \times \{0\} \times \{0\}$.
Thus, $y_{\alpha, \beta}$ satisfies \eqref{CN} and $y_{\alpha, \beta} \in \cY_{s}$ for $\alpha, \beta \in (0, 1)$ such that~\eqref{e:alphabeta} holds.

 Imposing the integrability of $ (\det \nabla y_{\alpha, \beta})^{-q}$ on~$S_{1}$ we deduce that it must be
\begin{align}
\label{e:alphabeta-2}
	( 1- \alpha - \beta ) q >-1.
\end{align}
Combining ~\eqref{e:alphabeta} and~\eqref{e:alphabeta-2}, we infer that for any choice of~$p \in (3, 4)$ and of~$q \in (2, \frac{p}{p-2})$, we can find $\alpha, \beta \in (0, 1)$ such that $y_{\alpha, \beta} \in \cY_{s}$ with $(\det \nabla y_{\alpha, \beta} )^{-q} \in L^{1}(\Omega_{s})$. 
A direct estimate of $W(\nabla y_{\alpha, \beta})$ on~$S'_{1}$ yields that
\begin{align}
\label{e:W3d}
	0 &\leq \int_{S'_{1}} W(\nabla y_{\alpha, \beta}) - W(\idmatrix) \, dx 
	\lesssim s^{2} + s^{(\alpha -1)p + 2}  + s^{\beta q + 1} s^{(1 - \alpha - \beta) q + 1} . 
\end{align}  
From~\eqref{e:alphabeta}--\eqref{e:W3d} we deduce that there exists $\rho \in (0, 1)$ (depending on~$\alpha, \beta$ but not on~$s$) such that 
\begin{align}
\label{e:W3d-2}
	0 \leq \int_{S'_{1}} W(\nabla y_{\alpha, \beta}) - W(\idmatrix) \, dx & \lesssim s^{1 + \rho}.
\end{align} 
As for $S''_{1}$, we may use the estimate of~\eqref{W-upperboundrest} and obtain that
\begin{equation}
\label{e:W3d-3}
	0 \leq \int_{S''_{1}} W(\nabla y_{\alpha, \beta}) - W(\idmatrix) \, dx  \lesssim s^{1 + 2\alpha}.
\end{equation}
Defining~$\delta := \min\{ \rho, 2\alpha\}$ we infer that
\begin{equation}
\label{e:W3d-4}
	0 \leq \int_{S_{1}} W(\nabla y_{\alpha, \beta}) - W(\idmatrix) \, dx  \lesssim s^{1 + \delta}.
\end{equation}
Arguing in the same way, estimate~\eqref{e:W3d-4} can be obtained on~$S_{2}$, leading to~\eqref{e:3d-step1}. This concludes the proof of the proposition.
\end{proof}

The following two lemmas show some useful properties of deformations~$y \in W^{1, \infty} (\Omega_{s};\RR^{3}) \cap \cY_{s}$ with low energy, which will be useful to conclude for~\eqref{e:3d-step2}. In the sequel, we denote by
$\pi\colon \RR \to [-1, 1]$ the projection of~$\RR$ to the interval $[-1, 1]$, defined as 
\begin{equation*}
	\pi(t):= 
	\begin{cases}
		t, & \text{if } -1\leq t\leq 1,\\
		-1, & \text{if } t< -1,\\
		1, & \text{if } t> 1.\\
	\end{cases}
\end{equation*}

\begin{lem}
\label{p:proof2}
	There exists $M>0$ such that for every $s \in (0, 1)$
	\begin{equation}
	\label{e:rough-estimate}
		\min_{y \in W^{1, \infty}(\Omega_{s};\RR^{3}) \cap \cY_{s}} \, E_{s} (y_{s}) \leq Ms.
	\end{equation}
\end{lem}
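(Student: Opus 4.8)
The plan is to prove Lemma~\ref{p:proof2} by exhibiting an explicit Lipschitz competitor $y \in W^{1,\infty}(\Omega_s;\RR^3) \cap \cY_s$ whose energy is $\mathcal{O}(s)$, exactly mirroring the strategy already used for the upper bound in the two-dimensional Proposition~\ref{p:lav-2d-2}. The idea is to leave $S_1$ undeformed and to route $S_2$ ``around'' $S_1$ so that the two images do not intersect, thereby satisfying \eqref{CN} while maintaining the Dirichlet condition on $\Gamma_s$. Concretely, I would set $y(x):=x$ on $S_1$ and on $S_2$ prescribe a bi-Lipschitz map of the form $y(x):=x-\xi+\Phi(x)$, where $\Phi$ shifts the central cross-section of $S_2$ (the slab where the two cuboids would otherwise overlap) laterally in the $x_1$- or $x_2$-direction by an amount of order $1$, tapering linearly to zero at the Dirichlet faces of $S_2$. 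This is the three-dimensional analogue of the planar picture in Fig.~\ref{f:around}: the thin cuboid $S_2$ is bent sideways to avoid $S_1$ rather than passing through it.

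The key steps, in order, are the following. First, write down $\Phi$ explicitly as a piecewise-affine deformation depending on the coordinate along the long axis of $S_2$ (the $x_2$-direction after applying $Q$), interpolating linearly between the identity near the two Dirichlet faces and a sideways-shifted configuration near the center, with the shift chosen large enough (independent of $s$) that $y(S_2)$ clears $y(S_1)=S_1$. Second, verify the three admissibility requirements: (a) $y=y_0$ on $\Gamma_s$, which holds by construction since the interpolation is the identity near the constrained faces; (b) $\det\nabla y>0$ a.e., which follows because $\Phi$ is a small enough shear/translation that the affine pieces keep positive determinant — here one uses that the shift is order $1$ but the transition is spread over a length of order $1$, so the relevant gradient entries stay bounded away from the values that would collapse the determinant; (c) the Ciarlet--Ne\v{c}as condition \eqref{CN}, which holds because $y$ is genuinely injective (the two images are disjoint by design and each piece is mapped bi-Lipschitzly), so $\int_{\Omega_s}\det\nabla y\,dx=|y(\Omega_s)|$ with equality. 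Third, estimate the energy: since $y$ is the identity on $S_1$ and is a fixed (s-independent) bi-Lipschitz map with $\nabla y$ bounded and bounded away from degeneracy on $S_2$, the integrand $W(\nabla y)-W(\idmatrix)$ is bounded by a constant depending only on $p,q,d$; integrating over $\Omega_s$, whose volume is $\mathcal{O}(s)$, yields $E_s(y)\lesssim s$, giving the constant $M$.

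The main obstacle, and the only genuinely three-dimensional subtlety, is constructing $\Phi$ so that $y(S_2)$ avoids $y(S_1)$ while keeping $\det\nabla y$ positive and $\|\nabla y\|_{L^\infty}$ bounded uniformly in $s$ — in two dimensions the ``go around'' deformation in Proposition~\ref{p:lav-2d-2} already does this, and I would check that its construction thickens trivially in the extra $x_1$-direction (on which the competitor acts as the identity), since $S_2$ in \eqref{def:omega_s-3D} is obtained from $S_1$ by the rotation $Q$ fixing the $x_1$-axis. Because the competitor leaves the $x_1$-coordinate untouched, the displacement $\Phi$ can be taken to depend only on $(x_2,x_3)$ and to coincide with the planar detour of Fig.~\ref{f:around}, so the determinant and gradient bounds reduce verbatim to the two-dimensional ones already established. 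Thus the bulk of the work is organizational: making the explicit formula precise enough that the three verifications above are immediate, after which the energy bound $E_s(y)\le Ms$ follows from $|\Omega_s|\approx s$ together with the uniform pointwise bound on $W(\nabla y)-W(\idmatrix)$.
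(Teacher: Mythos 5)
Your proposal is correct and is essentially the paper's own proof: the paper takes $y(x)=x$ on $S_1$ and $y(x)=x-\xi+\bigl(0,\tfrac{1+s}{1-2s}(1-|x_3|),0\bigr)$ on $S_2$, i.e.\ exactly the lateral shift of order one tapering linearly to zero at the Dirichlet faces that you describe (a shear, so $\det\nabla y=1$ on $S_2$, injective, with gradient bounded uniformly in small $s$). The energy bound then follows just as you argue, from the uniform pointwise bound on $W(\nabla y)-W(\idmatrix)$ together with $|\Omega_s|\approx s$.
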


\begin{proof}
The thesis follows easily by a direct construction of a competitor $y \in W^{1, \infty}(\Omega_{s} ; \RR^{3}) \cap \cY_{s}$. For instance, we define
\begin{displaymath}
	y(x) := \left\{ \begin{array}{ll}
	x & \text{for $x \in S_{1}$,}\\[2mm]
	x - \xi + 
	\left(
	\begin{matrix}
		0\\
		\frac{1+s}{1-2s}(1-|x_3|)\\
		0
	\end{matrix}
	\right) 
	& \text{for $x \in S_{2}$}.
	\end{array}\right.
\end{displaymath}
Then, it is clear that $E_{s} (y) \leq Ms$ for some $M>0$ independent of~$s$.  
\end{proof}

\begin{lem}
\label{p:proof3}
	Let $s \in (0, 1)$, $N>0$, $\gamma := 1 - \frac{2}{p}$, $\sigma \in (-s, s)$, and $y \in W^{1, \infty} (\Omega_{s};\RR^{3}) \cap \cY_{s}$ be such that 
	\begin{align}
		& \int_{-1}^{1} \int_{-1}^{1} W(\nabla y( x_{1}, x_{2}, \sigma)) - W(\idmatrix) \, dx_{1} \, dx_{2} \leq N \label{e:EN}.
	\end{align}
	Then, there exists $\overline{c}_{N, p}>0$ depending only on~$p$ and~$N$ (but not on~$s$) such that for every $\varepsilon>0$ the following holds: if
	\begin{align}
		& \int_{-1}^{1} \Bigg| \int_{-1}^{1} \big( | \partial_{2} y | (x_{1}, x_{2}, \sigma) - 1\big) dx_{2}  \Bigg|^{p} dx_{1} \leq \varepsilon , \label{e:eps}
	\end{align}
	then for every $x_{1} , x_{2} \in [-1, 1]$ 
	\begin{equation}
	\label{e:distance-H}
		\big| y (x_{1}, x_{2} , \sigma) - (x_{1}, \pi (y_{2} (x_{1}, x_{2} , \sigma) ) , \sigma) \big|  \leq  \overline{c}_{N, p} \, \varepsilon^{\frac{\gamma}{p+1}}.
	\end{equation}

	Similarly, if 
	\begin{align}
		& \int_{-1}^{1} \int_{-1}^{1} W(\nabla y( x_{1}, \sigma + 4, x_{3})) - W(\idmatrix) \, dx_{1} \, dx_{3} \leq N, \label{e:EN-2} \\
		& \int_{-1}^{1} \Bigg| \int_{-1}^{1} \big( | \partial_{3} y | (x_{1}, \sigma + 4, x_{3} ) - 1\big) dx_{3}  \Bigg|^{p} dx_{1} \leq \varepsilon , \label{e:eps-2}
	\end{align}
	then, for every $x_{1}$, $x_{3} \in [-1, 1]$
	\begin{equation}
	\label{e:distance-H-2}
		\big| y (x_{1},  \sigma + 4, x_{3} ) - (x_{1}, \sigma , \pi (y_{3} (x_{1}, \sigma + 4, x_{3}) ) ) \big|  \leq  \overline{c}_{N, p} \, \varepsilon^{\frac{\gamma}{p+1}}.
	\end{equation}
\end{lem}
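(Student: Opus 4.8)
The plan is to prove the first assertion, \eqref{e:distance-H}, for slices of $S_1$; the second one, \eqref{e:distance-H-2} for slices of $S_2$, then follows verbatim, since $y$ on $S_2$ is defined by conjugating with the rotation $Q\in SO(3)$ built into \eqref{def:omega_s-3D}, and $Q$ leaves $W$, the energy, and all distances invariant. Fix $\sigma$ and work on the planar slice map $(x_1,x_2)\mapsto y(x_1,x_2,\sigma)$ on $(-1,1)^2$, whose tangential gradient $(\partial_1 y,\partial_2 y)$ is dominated by $\nabla y$. The Dirichlet condition pins the edges $x_2=\pm1$, namely $y(x_1,\pm1,\sigma)=(x_1,\pm1,\sigma)$, so for each $x_1$ the curve $t\mapsto y(x_1,t,\sigma)$ joins the two endpoints of the reference segment $R_{x_1}:=\{(x_1,t,\sigma):t\in[-1,1]\}$. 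Since $(x_1,\pi(y_2(x_1,x_2,\sigma)),\sigma)$ is exactly the nearest point of $R_{x_1}$ to $y(x_1,x_2,\sigma)$, proving \eqref{e:distance-H} amounts to bounding $\mathrm{dist}(y(x_1,x_2,\sigma),R_{x_1})$ uniformly in $(x_1,x_2)$.

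First I would record an elementary chord-versus-arc estimate. Writing $L(x_1):=\int_{-1}^1|\partial_2 y(x_1,t,\sigma)|\,dt$ for the length of this curve and $A,B$ for its fixed endpoints (with $|AB|=2$), the chord inequality gives $L(x_1)\ge|PA|+|PB|$ for every $P=y(x_1,x_2,\sigma)$. Evaluating $|PA|+|PB|$ in terms of $d:=\mathrm{dist}(P,R_{x_1})$, and distinguishing whether the foot of $P$ on the line through $A,B$ lies inside or outside the segment, yields a universal constant $c_0>0$ with $L(x_1)-2\ge c_0\min\{d,d^2\}$. Because the hypothesis \eqref{e:eps} reads $\int_{-1}^1(L(x_1)-2)^p\,dx_1\le\varepsilon$ (the integrand being nonnegative since $L(x_1)\ge2$), this already controls the distance for most $x_1$: one has $d\lesssim\sqrt{L(x_1)-2}$ in the small regime.

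The hard part is upgrading this averaged smallness to a uniform one. From \eqref{e:EN} and the coercivity \eqref{Wcoercivity} of Proposition~\ref{p:properties-W} one gets $\int_{(-1,1)^2}|(\partial_1 y,\partial_2 y)|^p\lesssim N+C$, so Morrey's embedding $W^{1,p}\hookrightarrow C^{0,\gamma}$ with $\gamma=1-\frac2p$ (recall $p>2$) makes the slice map $\gamma$-Hölder with seminorm $H\lesssim N^{1/p}$. Consequently $F(x_1):=\sup_{x_2}\mathrm{dist}(y(x_1,x_2,\sigma),R_{x_1})$ is $\gamma$-Hölder in $x_1$ (the segments $R_{x_1}$ depend $1$-Lipschitz-continuously on $x_1$), with constant $\lesssim H$, and is a priori bounded by $\lesssim N^{1/p}$ through the pinned edge $x_2=1$. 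Setting $m:=\|F\|_\infty=F(\bar x_1)$, on the interval $|x_1-\bar x_1|\le\rho$ with $\rho:=(m/2H)^{1/\gamma}$ one has $F\ge m/2$, hence $L(x_1)-2\gtrsim m^2$ there by the chord-arc bound; integrating over this interval and inserting \eqref{e:eps} gives $\varepsilon\gtrsim\rho\,m^{2p}\sim H^{-1/\gamma}m^{2p+1/\gamma}$, that is $m\lesssim N^{c}\,\varepsilon^{\gamma/(2p-3)}$.

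Finally I would reconcile the exponent with the statement: since $2p-3<p+1$ for $p<4$, for $\varepsilon\le1$ the bound just obtained is stronger than the claimed $\varepsilon^{\gamma/(p+1)}$, while for $\varepsilon\ge1$ the a priori bound $m\lesssim N^{1/p}\le\bar c_{N,p}\,\varepsilon^{\gamma/(p+1)}$ is trivial; enlarging $\bar c_{N,p}$ to cover both ranges yields \eqref{e:distance-H}. The main obstacle is precisely this averaged-to-uniform upgrade: the chord-arc step controls the distance only through the excess length $L(x_1)-2$, which \eqref{e:eps} bounds merely in $L^p_{x_1}$, so one must pay with the quantitative modulus of continuity coming from the energy bound $N$ and optimize the localization scale $\rho$. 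The two cases in the chord-arc computation are elementary but both must be treated to obtain a single clean lower bound for $L(x_1)-2$.
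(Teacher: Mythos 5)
Your proof is correct, but the key step — upgrading the averaged smallness \eqref{e:eps} to a uniform bound — is genuinely different from the paper's. The paper argues via Chebyshev and a good/bad set: it introduces $D_{\varepsilon}:=\{x_1 : \int_{-1}^1(|\partial_2 y|-1)\,dx_2\le \varepsilon^{1/(p+1)}\}$, shows $|(-1,1)\setminus D_\varepsilon|\le\varepsilon^{1/(p+1)}$, controls the deviation for $x_1\in D_\varepsilon$ by the chord argument (the square root producing $\varepsilon^{1/(2(p+1))}$), and for $x_1\notin D_\varepsilon$ transfers the bound from a nearby good point $\overline{x}_1$ with $|x_1-\overline{x}_1|\le 2\varepsilon^{1/(p+1)}$ via the Morrey--H\"older estimate; this transfer is exactly where the exponent $\tfrac{\gamma}{p+1}$ in \eqref{e:distance-H} originates. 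You instead localize at the maximum: with $m=\|F\|_\infty$, the H\"older modulus keeps $F\ge m/2$ on an interval of length $\rho\sim m^{1/\gamma}$, there the chord-arc estimate forces the excess length to be $\gtrsim m^2$, and integrating against \eqref{e:eps} gives $m\lesssim_N\varepsilon^{\gamma/(2p-3)}$ (your exponent arithmetic $2p\gamma+1=2p-3$ is right). Both routes rest on the same three ingredients — nonnegativity of the excess length $\int_{-1}^1(|\partial_2 y|-1)\,dx_2$, Morrey's embedding applied to the slice, and a chord-versus-arc estimate — and your observation that $(x_1,\pi(y_2),\sigma)$ is precisely the nearest point of the segment $R_{x_1}$ neatly unifies, in one geometric lemma with the two "foot inside/outside" cases, the case distinction the paper handles separately in its proof. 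As for what each approach buys: your exponent $\gamma/(2p-3)$ is strictly stronger than $\gamma/(p+1)$ when $\varepsilon\le 1$, but only because $2p-3<p+1$, i.e.\ $p<4$; so your proof establishes the lemma exactly in the range $p\in(3,4)$ assumed throughout Section~\ref{sec:Lav3d} (plus the trivial regime $\varepsilon\ge1$ via the a priori bound), whereas the paper's argument yields the stated exponent for every $p>3$. Two minor points: for a general competitor $y\in\cY_s$ it is not accurate that "$y$ on $S_2$ is defined by conjugating with $Q$" — that describes the map constructed in Proposition~\ref{p:lav3d-1}, not an arbitrary admissible $y$ — but your symmetry reduction remains valid since it is the domain $S_2=\xi+QS_1$ and the Dirichlet data on $\Gamma_s\cap\partial S_2$ that are rigid images of those on $S_1$, which is all the argument uses (and is what the paper's closing sentence of this proof asserts); and the interval $\{|x_1-\overline{x}_1|\le\rho\}$ may be truncated by the endpoints of $(-1,1)$, which costs only a harmless factor in the constant.
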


\begin{proof}
As $p>3$, Morrey's embedding and~\eqref{e:EN} imply that the map $(x_{1}, x_{2}) \mapsto y(x_{1}, x_{2}, \sigma)$ is H\"older-continuous. Precisely, there exists $\widetilde{c}_{N, p}>0$ depending only on~$p$ and $N$ such that for every $x_{1}, x_{2}, \overline{x}_{1}, \overline{x}_{2} \in [-1, 1]$
\begin{equation}
\label{e:Holder}
	| y(x_{1}, x_{2}, \sigma) - y(\overline{x}_{1}, \overline{x}_{2}, \sigma)| \leq \widetilde{c}_{N, p}  \, | (x_{1}, x_{2}) - (\overline{x}_{1}, \overline{x}_{2})|^{\gamma}.
\end{equation}

Let us define the set $D_{\varepsilon} \subseteq (-1, 1)$ as
\begin{displaymath}
	D_{\varepsilon}:= \Bigg\{ x_{1} \in (-1, 1) : \ \int_{-1}^{1} \big( | \partial_{2} y| (x_{1}, x_{2}, \sigma) - 1\big) \, dx_{2} \leq \varepsilon^{\frac{1}{p+1}}\Bigg\}.
\end{displaymath}
In particular, we notice that, due to the boundary condition on~$\Gamma_{s}$, we have that
\begin{displaymath}
	\int_{-1}^{1} \big( | \partial_{2} y| (x_{1}, x_{2}, \sigma) - 1\big) \, dx_{2} \geq 0 \qquad \text{for a.e.~$x_{1} \in (-1, 1)$}.
\end{displaymath}
Then, by~\eqref{e:eps} and by the Chebyshev inequality, 
\begin{align}
\label{e:Deps}
	\mathcal{H}^{1} & ((-1, 1) \setminus D_{\varepsilon})  
	\leq \frac{1}{\varepsilon^{\frac{p}{p+1}}} \int_{-1}^{1} \Bigg| \int_{-1}^{1} \big( | \partial_{2} y | (x_{1}, x_{2}, \sigma) - 1 \big) \, dx_{2} \Bigg|^{p} dx_{1} 
	\leq \varepsilon^{\frac{1}{p+1}}. 
\end{align}

Let us now fix $x_{1} \in D_{\varepsilon}$ and $x_{2} \in [-1, 1]$, let us denote by $\theta_{x_{1}} \colon [-1, 1] \to \RR^{3}$ the curve $\theta_{x_{1}} (t) := y(x_{1}, t, \sigma)$, and let us write
\begin{displaymath}
	y(x_{1}, x_{2}, \sigma) = (x_{1}, \pi ( y_{2} (x_{1}, x_{2}, \sigma)) , \sigma) + v.
\end{displaymath}
for suitable $v = (v_{1}, v_{2}, v_{3}) \in \RR^{3}$ depending on~$(x_{1}, x_{2})$. In particular, we notice one of the two cases must hold:
\begin{enumerate}[label=(\roman*), ref=\roman*]
	\item \label{3di} $\pi ( y_{2} (x_{1}, x_{2}, \sigma)) =   y_{2} (x_{1}, x_{2}, \sigma)$ and $v_{2} = 0$,
	\smallskip
	\item \label{3dii} $\pi ( y_{2} (x_{1}, x_{2}, \sigma)) \in \{1, -1\}$ and $|v_{2}| = \min \{ | 1 -   y_{2} (x_{1}, x_{2}, \sigma)| ; | 1+ y_{2} (x_{1}, x_{2}, \sigma)|\} $.
\end{enumerate}
In the case~\eqref{3di}, by definition of~$D_{\varepsilon}$ and by the boundary conditions on~$y$ we have that
\begin{align}
\label{e:something}
	2 + \varepsilon^{\frac{1}{p+1}} & \geq \int_{-1}^{1} | \dot{\theta}_{x_{1}} (t)| \, dt \geq |(x_{1}, -1, \sigma) - y(x_{1}, x_{2}, \sigma)| + |  y(x_{1}, x_{2}, \sigma) - (x_{1}, 1, \sigma)| 
	 \\
	 &
	 = | (v_{1}, y_{2} (x_{1}, x_{2}, \sigma) + 1, v_{3})| + | (v_{1}, y_{2} (x_{1}, x_{2}, \sigma) - 1, v_{3})| \nonumber
	 \\
	 &
	 = \sqrt{(v_{1}^{2} + v_{3}^{2}) + ( y_{2} (x_{1}, x_{2}, \sigma) + 1)^{2}}  + \sqrt{(v_{1}^{2} + v_{3}^{2}) + ( y_{2} (x_{1}, x_{2}, \sigma) - 1)^{2}}\nonumber
	 \\
	 &
	 \geq \min_{z \in [-1, 1]}  \sqrt{(v_{1}^{2} + v_{3}^{2}) + ( z + 1)^{2}}  + \sqrt{(v_{1}^{2} + v_{3}^{2}) + ( z- 1)^{2}} \nonumber
	 \\
	 &
	 = 2 \sqrt{ (v_{1}^{2} + v_{3}^{2}) + 1}, \nonumber
\end{align}
which implies that $|v| \leq \varepsilon^{\frac{1}{p+1}} +  \varepsilon^{\frac{1}{2 (p+1)}}$.

If \eqref{3dii} holds and $y_{2}( x_{1}, x_{2}, \sigma) \notin [-1, 1]$, we may repeat the argument of the first two lines of~\eqref{e:something} and obtain that $|v| \leq \varepsilon^{\frac{1}{p+1}}$. All in all, we have shown that for every $x_{1} \in D_{\varepsilon}$ and every $x_{2} \in [-1, 1]$ it holds
\begin{equation}
\label{e:intermediate-proof2}
	| y(x_{1}, x_{2}, \sigma) - (x_{1}, \pi (y_{2} (x_{1}, x_{2}, \sigma), \sigma ) | \leq  \varepsilon^{\frac{1}{p+1}} + \varepsilon^{\frac{1}{2 (p+1)}}.
\end{equation}

To achieve~\eqref{e:distance-H} it remains to consider $x_{1} \notin D_{\varepsilon}$. In this case, by~\eqref{e:Deps} we may find $\overline{x}_{1} \in D_{\varepsilon}$ such that $| x_{1} - \overline{x}_{1}| \leq 2 \varepsilon^{\frac{1}{p+1}}$. Then, by triangle inequality, by the H\"older continuity \eqref{e:Holder} of~$y$, and by the previous step we have that for every $x_{2} \in [-1, 1]$
\begin{align}
\label{e:something-2}
	|& y(x_{1}, x_{2}, \sigma) - (x_{1}, \pi (y_{2}(x_{1}, x_{2}, \sigma)), \sigma) |  
	\\
	&
	\leq | y(x_{1}, x_{2}, \sigma) - y( \overline{x}_{1}, x_{2}, \sigma)| + |y( \overline{x}_{1}, x_{2}, \sigma) -  (\overline{x}_{1}, \pi (y_{2}(\overline{x}_{1}, x_{2}, \sigma)), \sigma) | \nonumber
	\\
	&
	\quad + | (\overline{x}_{1}, \pi (y_{2}(\overline{x}_{1}, x_{2}, \sigma)), \sigma) -  (x_{1}, \pi (y_{2}(x_{1}, x_{2}, \sigma)), \sigma)| \nonumber
	\\
	&
	\leq 2 \, \widetilde{c}_{N, p} \,  | x_{1} - \overline{x}_{1}|^{\gamma} +  \varepsilon^{\frac{1}{p+1}} + \varepsilon^{\frac{1}{2 (p+1)}} + | x_{1} - \overline{x}_{1}|  \nonumber
	\\
	&
	\leq 2^{1+\gamma} \, \widetilde{c}_{N, p} \, \varepsilon^{\frac{\gamma}{p+1}} + 3 \varepsilon^{\frac{1}{p+1}} + \varepsilon^{\frac{1}{2 (p+1)}} \leq \overline{c}_{N, p} \varepsilon^{\frac{\gamma}{p+1}} \nonumber
\end{align}
for a suitable constant~$\overline{c}_{N, p}>0$ depending only on~$\gamma$ and~$\widetilde{c}_{N, p}$, and therefore only on~$p$ and~$N$. This concludes the proof of~\eqref{e:distance-H}.

The same argument can be used to infer~\eqref{e:distance-H-2} taking into account the boundary conditions on~$\partial S_{2}$.
\end{proof}

We are now in a position to prove Proposition~\ref{p:lav3d-2}.

\begin{proof}[Proof of Proposition~\ref{p:lav3d-2}]
Since Lemma~\ref{p:proof2} holds, we are left to provide a lower bound for the minimum problem~\eqref{e:3d-step2}. To this purpose, let $M >0$ be the constant determined in Lemma~\ref{p:proof2} and fix a deformation $y \in W^{1, \infty} (\Omega_{s};\RR^{3}) \cap \cY_{s}$ such that 
\begin{equation}
\label{e:en-bound-M}
	E_{s} (y) \leq (M+1)s.
\end{equation}

Let us fix $N >0$ such that $\frac{M+1}{N} <\frac{1}{10}$ and let us set
\begin{align*}
	A_{N} & := \Bigg\{ \sigma \in (-s, s) : \, \int_{-1}^{1} \int_{-1}^{1} W(\nabla y (x_{1}, x_{2}, \sigma) ) - W(\idmatrix) \, dx_{1} \, dx_{2} \leq N \Bigg\}, \\
	B_{N} & := \Bigg\{ \sigma \in (-s, s) : \, \int_{-1}^{1} \int_{-1}^{1} W(\nabla y (x_{1},  \sigma + 4, x_{3}) ) - W(\idmatrix) \, dx_{1} \, dx_{3} \leq N \Bigg\}.
\end{align*}
Then, by the Chebyshev inequlity and by~\eqref{e:en-bound-M} we have that
\begin{align}
	\mathcal{H}^{1} ((-s, s) \setminus A_{N}) & \leq \frac{1}{N} E_{s} (y) \leq \frac{M+1}{N} \, s < \frac{s}{10}, \label{e:AL}\\
	\mathcal{H}^{1} ((-s, s) \setminus B_{N}) & \leq \frac{1}{N} E_{s} (y) \leq \frac{M+1}{N} \, s < \frac{s}{10}. \label{e:BL}
\end{align}
Hence, we deduce from~\eqref{e:AL} and~\eqref{e:BL} that
\begin{equation}
\label{e:ALBL}
	\mathcal{H}^{1} (A_{N} \cap B_{N}) \geq \frac{18}{10}\, s.
\end{equation}
We further set~$\gamma:= 1 - \frac{2}{p}$ and fix $\varepsilon>0$ such that $\overline{c}_{N, p} \, \varepsilon^{\frac{\gamma}{p+1}} \leq \frac{1}{3}$, where~$\overline{c}_{N, p}>0$ is the constant defined in Proposition~\ref{p:proof3}. 
We claim that 
for every $\sigma \in A_{N} \cap B_{N}$, at least one of the following inequalities must  hold:
\begin{subequations}
\label{e:claim-3d}
	\begin{align}
		& \int_{-1}^{1} \Bigg| \int_{-1}^{1} \big( | \partial_{2} y| (x_{1}, x_{2}, \sigma) - 1\big) \, dx_{2} \Bigg|^{p} dx_{1} >\varepsilon, \label{e:claim-eps-1}\\
		& \int_{-1}^{1} \Bigg| \int_{-1}^{1} \big( | \partial_{3} y| (x_{1}, \sigma + 4, x_{3}) - 1\big) \, dx_{3} \Bigg|^{p} dx_{1} >\varepsilon. \label{e:claim-eps-2}
	\end{align}
	\end{subequations}
	By contradiction, let us assume that both inequalities~\eqref{e:claim-eps-1} and~\eqref{e:claim-eps-2} are not satisfied for some $\sigma$.
	By Lemma~\ref{p:proof3}, we deduce that for every $x_{1}, x_{2}, x_{3} \in [-1, 1]$,
	\begin{subequations}
	\label{e:y12}
	\begin{align}
		& | y(x_{1}, x_{2}, \sigma) - (x_{1}, \pi (y_{2}(x_{1}, x_{2}, \sigma)) , \sigma) | \leq \frac{1}{3},\label{e:y1}\\
		& | y(x_{1},  \sigma + 4, x_{3}) - (x_{1}, \sigma , \pi (y_{3}(x_{1}, \sigma + 4 , x_{3})) ) | \leq \frac{1}{3}.\label{e:y2}
	\end{align}
\end{subequations}
We now show that given \eqref{e:y1} and \eqref{e:y2}, $y\in \cY_{s}$ cannot be injective in $\Omega_s$. This immediately yields a contradiction, as we already know that any $y \in W^{1, \infty}(\Omega_{s}; \RR^{3}) \cap \cY_{s}$ with finite energy must be a homeomorphism (as a consequence of the theory mappings of finite distortion \cite{ManVill1998}, as already outlined in the introduction). 

To see that $y$ indeed cannot be injective, let us consider the function
\[
	g \colon [-1,1]^3\to \RR^3,\qquad g(x_1,\tau_1,\tau_2):=y(x_1,\tau_1,\sigma)-y(0,\sigma+4,-\tau_2). 
\]
Notice that if $g(\hat x_1,\hat\tau_1,\hat\tau_2)=0$ for some $(\hat x_1,\hat\tau_1,\hat\tau_2)\in(-1,1)^3$, then~$y$ is not injective, since $(\hat x_1,\hat\tau_1,\sigma)\in S_1$, $(0,\sigma+4,-\hat\tau_2)\in S_2$ and the values of~$y$ on these two points coincide. As a consequence of \eqref{e:y12} and of the boundary conditions of $y\in \cY_{s}$ on $\Gamma_s$, the vector field $g=(g_1,g_2,g_3)$ 
always points outwards on the boundary of the cube $[-1,1]^3$:   
\begin{displaymath}
\begin{alignedat}{2}
  & g_1(-1,\tau_1,\tau_2)\leq -1+\frac{2}{3}<0,\\
	& g_1(1,\tau_1,\tau_2)\geq 1-\frac{2}{3}>0,\\
	& g_2(x_1,-1,\tau_2)=-1-y_2(0,\sigma+4,-\tau_2)
	\leq -1-\sigma+\frac{1}{3}<0, \\
	& g_2(x_1,1,\tau_2)=1-y_2(0,\sigma+4,-\tau_2)
	\geq 1-\sigma-\frac{1}{3}>0,\\
	& g_3(x_1,\tau_1,-1)=y_3(x_1,\tau_1,\sigma)-1 \leq \sigma+\frac{1}{3}-1<0,\\
	& g_3(x_1,\tau_1,1)=y_3(x_1,\tau_1,\sigma)+1\geq 1-\sigma+\frac{1}{3}>0.\\
\end{alignedat}
\end{displaymath}
(Above, we also used that $s$ is small enough so that $\abs{\sigma}\leq s <\frac{1}{3}$.)
As $g$ is also continuous, it thus satisfies the prerequisites of the Poincar\'{e}--Miranda theorem (see \cite{Ma13a}, e.g.). The latter yields that $g$ attains the value $0\in\RR^3$ in $[-1,1]^3$; actually even in $(-1,1)^3$, as the above rules out zeroes on the boundary. (Alternatively, this is also not hard to see directly, observing that the topological degree of $g$ satisfies $\operatorname{deg}(g;(-1,1)^3;0) 
=\operatorname{deg}(\operatorname{id};(-1,1)^3;0)=1$ by homotopy invariance of the degree.)   
Consequently, $y$ is not injective on $\Omega_s$. 

We are in a position to conclude the proof of Theorem~\ref{thm:lav3d}. Let us define
\begin{align*}
	\mathcal{A} & := \{ \sigma \in A_{N} \cap B_{N} | \, \text{\eqref{e:claim-eps-1} holds}\},\\
	\mathcal{B} & := \{ \sigma \in A_{N} \cap B_{N} | \, \text{\eqref{e:claim-eps-2} holds}\} \setminus \mathcal{A}.
\end{align*}
Since one of inequalities \eqref{e:claim-eps-1} or \eqref{e:claim-eps-2} holds for every $\sigma \in A_{N} \cap B_{N}$,
we have that $\mathcal{A} \cup \mathcal{B} = A_{N} \cap B_{N}$. 
While by construction we clearly have that $\mathcal{A} \cap \mathcal{B} = \varnothing$. Arguing as in~\eqref{e:en-estimate}, applying Proposition~\ref{p:lower-bound} we estimate the energy~$E_{s}(y)$ as
\begin{align*}
	E_{s} (y) & \geq c \int_{\mathcal{A}} \int_{-1}^{1} \int_{-1}^{1} \big| \| \nabla y ( x_{1}, x_{2}, x_{3})\|_{2} - 1\big|^{p} dx_{1} \, dx_{2} \, dx_{3} 
	\\
	&
	\qquad +  c \int_{\mathcal{A}} \int_{-1}^{1} \int_{-1}^{1} \big| \| \nabla y (x_{1}, x_{2}, x_{3}) \|_{2} - 1\big|^{2} dx_{1} \, dx_{2} \, dx_{3} \nonumber
	\\
	&
	\qquad + c   \int_{\mathcal{B} + 4} \int_{-1}^{1} \int_{-1}^{1} \big| \| \nabla y (x_{1}, x_{2}, x_{3}) \|_{2} - 1\big|^{p} dx_{1} \, dx_{3} \, dx_{2} \nonumber
	\\
	&
	\qquad + c   \int_{\mathcal{B} + 4} \int_{-1}^{1} \int_{-1}^{1} \big| \| \nabla y (x_{1}, x_{2}, x_{3}) \|_{2} - 1\big|^{2} dx_{1} \, dx_{3} \, dx_{2} \nonumber
	\\
	&
	\geq c \int_{\mathcal{A}} \int_{-1}^{1} \int_{-1}^{1} \big| | \partial_{2} y ( x_{1}, x_{2}, x_{3})| - 1\big|^{p} dx_{1} \, dx_{2} \, dx_{3} \nonumber
	\\
	&
	\qquad +  c \int_{\mathcal{A}} \int_{-1}^{1} \int_{-1}^{1} \big| | \partial_{2} y (x_{1}, x_{2}, x_{3}) | - 1\big|^{2} dx_{1} \, dx_{2} \, dx_{3} \nonumber
	\\
	&
	\qquad + c   \int_{\mathcal{B} + 4} \int_{-1}^{1} \int_{-1}^{1} \big| | \partial_{3} y (x_{1}, x_{2}, x_{3}) | - 1\big|^{p} dx_{1} \, dx_{3} \, dx_{2} \nonumber
	\\
	&
	\qquad + c   \int_{\mathcal{B} + 4} \int_{-1}^{1} \int_{-1}^{1} \big| | \partial_{3} y (x_{1}, x_{2}, x_{3}) | - 1\big|^{2} dx_{1} \, dx_{3} \, dx_{2} \nonumber
	\\
	&
	\geq 2^{p-1}c \int_{\mathcal{A}} \int_{-1}^{1} \Bigg| \int_{-1}^{1} ( | \partial_{2} y ( x_{1}, x_{2}, x_{3})| - 1)dx_{2} \Bigg|^{p}  dx_{1} \, dx_{3} \nonumber
	\\
	&
	\qquad + 2^{p-1} c   \int_{\mathcal{B} + 4} \int_{-1}^{1} \Bigg|  \int_{-1}^{1} ( | \partial_{3} y (x_{1}, x_{2}, x_{3}) | - 1)  dx_{3} \Bigg|^{p} \, dx_{1} \, dx_{2} \nonumber
	\\
	&
	\geq 2^{p-1} c \, \varepsilon  \frac{18}{10} \, s . \nonumber
\end{align*}
All in all, we have shown that any deformation~$y \in W^{1, \infty}(\Omega_{s};\RR^{3}) \cap \cY_{s}$ satisfying~\eqref{e:en-bound-M} has energy $E_{s}(y)\geq \delta s$ for some positive constant~$\delta$ independent of~$s$. Thus,~\eqref{e:3d-step2} holds and the proof of the proposition is concluded. 
\end{proof}

\begin{rem}\label{rem:3}
	Similarly to Remark~\ref{rem:1}, we point out that the Lavrentiev phenomenon in dimension $d=3$ is valid if we replace $W^{1, \infty} (\Om_{s}; \RR^{3})$ with~$W^{1,r} (\Om_{s}; \RR^{3})$ for $r>\frac{6q}{q-2}$. As in~\eqref{neq:distprtion-int}, we would indeed have that for $y \in W^{1,r} (\Om_{s}; \RR^{2})\cap \cY_{s}$ with $E_{s} (y) <+\infty$ the distortion coefficient $K_{y} = \frac{|\nabla y|^3}{\det \nabla y}$ belongs to~$L^{\eta} (\Om_{s})$ for~$\eta:=\frac{rq}{3q+r}$, $\eta \in (2, q)$. This implies that any competitor $y \in W^{1,r} (\Om_{s}; \RR^{3})\cap \cY_{s}$ with  energy $E_{s}(y) \approx s$ still fulfills~\eqref{e:eps} and~\eqref{e:eps-2} of Lemma~\ref{p:proof3}. Hence, the proof of the lower bound of~$E_{s}(y)$ in Proposition~\ref{p:lav3d-2} proceeds as in the $W^{1,\infty}$-case.
\end{rem}

\section*{Declarations}

\subsection*{Funding} The work of S.A.\ was partially funded by the Austrian Science Fund (FWF) through the projects P35359-N and ESP-61. 
The work of S.K.\ was supported by the Czech-Austrian bilateral grants 21-06569K (GA\v{C}R-FWF) and 8J23AT008 (M\v{S}MT-WTZ mobility).
A.M.\ was supported by the European Unions Horizon 2020 research and innovation programme under the Marie Sk{\l}adowska-Curie grant agreement No 847693.

\subsection*{Authors' contributions} The authors equally contributed to the manuscript.

\subsection*{Ethics approval and consent to participate} Not applicable.

\subsection*{Competing interests} The authors declare that they have no competing interests.

\subsection*{Consent for publication} The publication has been approved by all co-authors.

\subsection*{Availability of data and materials} Not applicable.

\subsection*{Acknowledgements} Not applicable.

\bibliographystyle{plain}
\bibliography{NLEbib}

\end{document}